\pdfoutput=1
\documentclass{amsart}[12pt]
\usepackage{amssymb}
\usepackage{graphicx,epsfig}% para insertar figuras eps.
%\usepackage{srcltx} %Editing tool
%\allowdisplaybreaks[4]

\newtheorem{theorem}{Theorem}[section]
\newtheorem{lemma}[theorem]{Lemma}

\theoremstyle{definition}
\newtheorem{definition}[theorem]{Definition}
\newtheorem{example}[theorem]{Example}

\theoremstyle{remark}

\newtheorem{Definition}{\bf Definition}[section]

\newtheorem{Lem}[Definition]{\bf Lemma}

\newtheorem{Note}[Definition]{\bf Note}

\numberwithin{equation}{section}

%    Absolute value notation
\newcommand{\abs}[1]{\lvert#1\rvert}

%    Blank box placeholder for figures (to avoid requiring any
%    particular graphics capabilities for printing this document).

\reversemarginpar

\newcommand{\no}{\noindent}
\newcommand{\realpart}{\mathop{\rm Re}\nolimits}

\newcommand{\ba}{\begin{eqnarray}}
\newcommand{\ea}{\end{eqnarray}}

%%%%%%%%%%%%%%%%%%%%%%%%%%%%%%%%%%%%%%%
% Shortcuts for the number sets

\newcommand{\allR}{\mathbb{R}}
\newcommand{\allC}{\mathbb{C}}

\newcommand{\allZ}{\mathbb{Z}}

%%%%%%%%%%%%%%%%%%%%%%%%%%%%%%%%%%%%%%%

%%%%%%%%%%%%%%%%%%%%%%%%%%%%%%%%%%%%%%%
% Symbols for checking
%{\bf Victor}\clubsuit
%\newcommand{\stoli}{{\quad  \spadesuit}}%{\bf Olivier}

\newcommand{\eps}{\varepsilon}
\newcommand{\opR}{\hat{R}}
\newcommand{\TOp}{\hat{\mathcal{O}}}
\newcommand{\TOpp}{\hat{\mathcal{O}}'}
\newcommand{\NI}{I}
\newcommand{\NV}{V}
\newcommand{\NL}{L}
\newcommand{\T}{\mathcal{T}}
\newcommand{\Tbar}{\overline{\mathcal{T}}}
\newcommand{\updowncontour}{{\downarrow \! \uparrow}}
\newcommand{\MatsubaraSums}{\mathfrak{M}}
%%%%%%%%%%%%%%%%%%%%%%%%%%%%%%%%%%%%%%%
% Defined functions
\DeclareMathOperator{\nbe}{n_b}
\DeclareMathOperator{\Heaviside}{\mathcal{\vartheta}}
\DeclareMathOperator{\sign}{\epsilon}
\DeclareMathOperator{\Res}{Res}

\begin{document}

\title[On the evaluation of Matsubara sums] {On the evaluation of
Matsubara sums}

\author{Olivier Espinosa}
\address{Departamento de F{\'\i}sica,
Universidad T{\'{e}}c. Federico Santa Mar{\'\i}a, Valpara{\'\i}so, Chile}
\email{olivier.espinosa@usm.cl}

%\author{Victor H. Moll}
%\address{Department of Mathematics,
%Tulane University, New Orleans, LA 70118}
%\email{vhm@math.tulane.edu}

%    General info
\subjclass{Primary 33, Secondary 33E20, 33F99}

\date{\today}

%\keywords{zeta functions, Epstein sum}

\begin{abstract}
Given a connected (multi)graph $G$, consisting of $\NV$ vertices and $\NI$ lines,
we consider a class of multidimensional sums of the general form
\begin{equation*}
S_G:=
\sum\limits_{n_1  =  - \infty }^\infty
\sum\limits_{n_2  =  - \infty }^\infty
\cdots \sum\limits_{n_I  =  - \infty }^\infty  
{\frac{{\delta_G (n_1 ,n_2 , \ldots ,n_I;\{N_v\} )}}
{{\left( {n_1^2  + q_1^2 } \right)\left( {n_2^2  + q_2^2 } \right) \cdots \left( {n_I^2  + q_I^2 } \right)}}},
\end{equation*}
where the variables $q_i$ ($i=1,\ldots,\NI$) are real and positive and the variables $N_v$ ($v=1,\ldots,\NV$) are integer-valued. $\delta_G(n_1 ,n_2 , \ldots ,n_I;\{N_v\}  )$ is a function valued in $\{0,1\}$ which imposes a series of linear constraints among the summation variables $n_i$, determined by the topology of the graph $G$.

We prove that these sums, which we call \emph{Matsubara sums}, can be explicitly evaluated by applying a $G$-dependent linear operator $\TOp_G$ to the evaluation of the integral obtained from $S_G$ by replacing the discrete variables $n_i$ by continuous real variables $x_i$ and replacing the sums by integrals.
\end{abstract}

\maketitle

\section{Introduction}
\label{sec-introduction}

Infinite series are ubiquitous in mathematics.
In particular, both elementary and special functions are either defined in terms of an infinite series or have series representations of one sort or another. For example, the Hurwitz zeta function is defined by the series
\begin{equation}
\zeta(z,q) := \sum_{n=0}^{\infty} \frac{1}{(n+q)^{z}}, 
\label{def-zeta Hurwitz}
\end{equation}
\noindent
for $z\in\allC, \realpart{z} > 1$ and $ q\in\allC, q \neq 0, \, -1, \, -2, \cdots$. Riemann zeta function is a special case of Hurwitz zeta function, $\zeta(z)=\zeta(z,1)$. 
\medskip

Conversely, confronted with an infinite series, it is always a legitimate pursuit to try to evaluate it in terms of known elementary or special functions. Sometimes, as an intermediate step, the evaluation of infinite series can be reduced to the evaluation of an integral, which may or may not have a closed form evaluation.
%Perhaps the best known result in this regard is Euler-Maclaurin's summation formula \cite{whittaker & watson}. 
Consider, for instance, Plana's summation formula \cite{bateman v1},
\begin{equation}
\label{plana summation formula}
\sum\limits_{n = 0}^\infty  {f(n)}  = \frac{1}{2}f(0) 
+ \int_0^\infty  {f(x)dx}  
+ i\int_0^\infty  {\frac{{f(it) - f( - it)}}{{e^{2\pi t}  - 1}}dt} ,
\end{equation}
valid under certain restrictive growth conditions for the function $f$ in the complex domain.
\medskip

When applied to the series \eqref{def-zeta Hurwitz}, Plana's formula leads to Hermite's representation\cite{whittaker+watson},
\begin{equation}
\label{hermite1}
\zeta(z,q) = \frac{1}{2}q^{-z} + \frac{1}{z-1}q^{1-z} +
2 \int_{0}^{\infty} \frac{\sin\big(z \tan^{-1}(t/q)\big)}
{(q^2+t^{2})^{z/2} \, (e^{2 \pi t} -1)}\, dt,
\end{equation}
for $\realpart q>0$.
This representation actually provides a meromorphic extension of $\zeta(z,q)$ to the whole complex $z$-plane.
\medskip

A formula similar in spirit to Plana's summation formula can be obtained by use of contour integration and the residue theorem,
%\marginlabel{Is the $2\pi$ correct here?} 
\begin{equation}
\label{sum by contour integration}
\sum\limits_{n = -\infty}^\infty  {f(n)}  = \frac{1}{2\pi i}
\mathop{\int\mkern-20.8mu \circlearrowleft}\nolimits_\updowncontour
{2\pi\nbe(z)f(-iz)dz}, 
\end{equation}
provided the function $f(z)$ does not have poles on the imaginary axis. The contour denoted above by $\updowncontour$ runs parallel to the imaginary axis, upwards from the right and downwards from the left, encircling counterclockwise all the poles (located at $z=ni, n\in\allZ$) of the kernel $\nbe(z)$, defined as
\begin{equation}
\label{def-BE-kernel}
\nbe(z) = \frac{1}{e^{2\pi z}-1} = \frac{1}{2}\left(\coth \pi z - 1\right).
\end{equation}
If the function $f(-iz)$ is such that its integral along a circular contour at infinity vanishes, then we can evaluate the contour integral in \eqref{sum by contour integration} by splitting the original contour $\updowncontour$ into two closed clockwise contours, one on each side of the imaginary axis and enclosing all the poles of $f(-iz)$. As a simple example, consider the evaluation of the sum
\begin{equation}
\label{simplest sum}
\sum\limits_{n = -\infty}^\infty  \frac{1}{n^2+q^2},
\end{equation}
where $q$ is a positive real variable. In this case we have the evaluation
\begin{align*}
\frac{1}{2\pi i}\mathop{\int\mkern-20.8mu \circlearrowleft}\nolimits_\updowncontour
{2\pi\nbe(z)\frac{1}{-z^2+q^2}dz}
&=
2\pi \sum_{z=\pm q} \underset{z}{\Res}\left[ \nbe(z)\frac{1}{z^2-q^2}\right]\cr
&=
\frac{2\pi}{2q}\left[ \nbe(q) - \nbe(-q) \right].
\end{align*}
The identity
\begin{equation}
\label{nbe-identity-1}
\nbe(z)+\nbe(-z)+1 = 0
\end{equation}
can be used to obtain the known result,
\begin{equation}
\sum\limits_{n = -\infty}^\infty  \frac{1}{n^2+q^2}=\frac{\pi\coth \pi q}{q}.
\end{equation}
\medskip

In this paper we consider the evaluation of a class of sums, $\MatsubaraSums$, that generalizes the simple sum \eqref{simplest sum}. Each of the sums in $\MatsubaraSums$ is defined in terms of a particular kind of connected graph, in a way that we make explicit in the next section.

In principle, the sums in $\MatsubaraSums$, called \emph{Matsubara sums}, can be evaluated by the repeated direct application of the contour integration formula \eqref{sum by contour integration}, on a case by case basis. However, using an algebraic identity, M.~Gaudin \cite{gaudin} has been able to obtain a closed form evaluation of any Matsubara sum as a sum of terms corresponding to the \emph{trees} of the corresponding graph. This is reviewed in section \ref{sec-explicit evaluations}.

Starting from Gaudin's result, we prove that any Matsubara sum can be alternatively evaluated by applying a linear operator to the evaluation of an integral associated with the sum. Although this integral can also be computed using Gaudin's method, it is usually the case that the direct computation of the integral can be done in a straightforward manner by other means, including symbolic manipulation programs such as Mathematica or Maple. In this case the evaluation of the corresponding sum will be notably simplified.

%Although the sums in this class can be evaluated \emph{in principle} by the repeated application of the contour integration formula \eqref{sum by contour integration}, we will prove that they can be readily evaluated through the application of a simple linear operator to the result of a particular integral associated with the sum in question. This simplifies the evaluation of the sums notably, since the evaluation of the associated integral is much more straightforward.
%\medskip

The general results presented in this paper are a by-product of the study of general properties of Feynman graphs in the so-called Euclidean or imaginary-time formalism of finite-temperature quantum field theory. In this formalism the evaluation of each Feynman graph requires the computation of a sum of the Matsubara type. The existence of the linear operator referred to above was first conjectured from the analysis of two general classes of Feynman graphs \cite{espinosa+stockmeyer-2004} and then established in full generality \cite{espinosa-2005}. The aim of this paper is to present the relevant mathematical results for a readership of non-(particle) physicists. Accordingly, all reference to physical quantities has been removed.
\medskip

Before defining our class $\MatsubaraSums$ in full generality and presenting the general theorems, we will illustrate our main results for the simplest non-trivial sum in $\MatsubaraSums$, which we will call $S_{G_2}$:

\begin{equation}
\label{def-sumG2}
S_{G_2}:=
\sum\limits_{n_1  =  - \infty }^\infty  {\sum\limits_{n_2  =  - \infty }^\infty  {\frac{{\delta_{n_1  + n_2  - N,0} }}{{\left( {n_1^2  + q_1^2 } \right)\left( {n_2^2  + q_2^2 } \right)}}} }=\sum\limits_{n =  - \infty }^\infty  {\frac{1}{{\left( {n^2  + q_1^2 } \right)\left( {(N - n)^2  + q_2^2 } \right)}}}.
\end{equation}
Here $N\in\allZ$ and $q_1>0$, $q_2>0$ are real variables. The Kronecker delta symbol is defined for $n,m\in\allZ$ by
\begin{equation}
\label{def-Kronecker delta}
\delta_{n,m}=
\begin{cases}
1 & \text{if $n=m$},\\
0 & \text{otherwise}. 
\end{cases}
\end{equation}

A standard evaluation, by the method of residues \eqref{sum by contour integration} for example, yields
\begin{multline}
\label{sumG2-result}
\sum\limits_{n =  - \infty }^\infty  {\frac{1}
{{\left( {n^2  + q_1^2 } \right)\left( {(N - n)^2  + q_2^2 } \right)}}}  = \frac{{2\pi }}
{{2q_1 2q_2 }}\left[ {\frac{{1 + \nbe(q_1 ) + \nbe(q_2 )}}
{{Ni + q_1  + q_2 }} } \right.
\\
\left. { + \frac{{\nbe(q_1 ) - \nbe(q_2 )}}
{{Ni - q_1  + q_2 }} - \frac{{\nbe(q_1 ) - \nbe(q_2 )}}
{{Ni + q_1  - q_2 }} - \frac{{1 + \nbe(q_1 ) + \nbe(q_2 )}}
{{Ni - q_1  - q_2 }}} \right],
\end{multline}
where $\nbe(q)$ is the kernel defined in \eqref{def-BE-kernel}.
\medskip

We note that, although written using the imaginary unit $i=\sqrt{-1}$, the result is clearly real. In the form just written, it becomes easier to recognize the general structure of the results we shall present below.
\medskip

Consider the integral obtained by replacing, in the LHS of \eqref{sumG2-result}, the sum over the discrete variable $n$ by an integral over a continuous variable $x$:
\begin{align}
\label{intG2-result}
I_{G_2}:=
\int_{ - \infty }^\infty  {\frac{{dx}}
{{\left( {x^2  + q_1^2 } \right)\left( {(N - x)^2  + q_2^2 } \right)}}}  &= \frac{{\pi (q_1  + q_2 )}}
{{q_1 q_2 \left( {N^2  + (q_1  + q_2 )^2 } \right)}}
\cr
&= \frac{{2\pi }}
{{2q_1 2q_2 }}\left[ {\frac{1}
{{Ni + q_1  + q_2 }} - \frac{1}
{{Ni - q_1  - q_2 }}} \right].
\end{align}

It is clear from these explicit evaluations, that for this simple case there is a simple formal relation between the sum $S_{G_2}$ and the integral $I_{G_2}$. To formulate this relation we introduce {\em reflection operators} $\opR_i \equiv \opR(q_i)$, acting on the space of functions of the variables $q_j$,
\begin{equation}
\label{def-reflection operator}
\opR(q_i)f(q_1,\ldots,q_i,\ldots)=f(q_1,\ldots,-q_i,\ldots).
\end{equation}
%\begin{example}
For instance, we have
\begin{align*}
\opR(q_1)\left[
 \frac{{2\pi }}{{2q_1 2q_2 }}\frac{{1}}{{Ni - q_1  - q_2 }}
 \right]
& = -\frac{{2\pi }}{{2q_1 2q_2 }}\frac{{1}}{{Ni + q_1  - q_2 }},
\intertext{and}
\opR(q_2)\left[
 \frac{{2\pi }}{{2q_1 2q_2 }}\frac{{1}}{{Ni + q_1  + q_2 }}
 \right]
& = -\frac{{2\pi }}{{2q_1 2q_2 }}\frac{{1}}{{Ni + q_1  - q_2 }}.
\end{align*}
%\end{example}
\medskip

In terms of the reflection operator \eqref{def-reflection operator} we have:
\begin{equation}
\label{torG2}
S_{G_2}(N,q_1,q_2)=\left[ 1 + \nbe(q_1)\big( 1-\opR(q_1) \big)
+ \nbe(q_2)\big( 1-\opR(q_2) \big) \right] I_{G_2}(N,q_1,q_2).
\end{equation}

For instance, the term
\begin{equation*}
 - \frac{{2\pi }}{{2q_1 2q_2 }}\frac{{\nbe(q_1 )}}{{Ni + q_1  - q_2 }}
\end{equation*}
in \eqref{sumG2-result} is generated as
\begin{equation*}
 - \frac{{2\pi }}{{2q_1 2q_2 }}\frac{{\nbe(q_1 )}}{{Ni + q_1  - q_2 }}
=
-\nbe(q_1)\opR(q_1)\left[
 - \frac{{2\pi }}{{2q_1 2q_2 }}\frac{{1}}{{Ni - q_1  - q_2 }}
\right].
\end{equation*}

It is straightforward to show that the operator $\big(1 - \opR_1\big)\big(1 - \opR_2\big)$ annihilates the integral $I_{G_2}$ evaluated in \eqref{intG2-result}, that is:
\begin{equation}
\big(1 - \opR_1\big)\big(1 - \opR_2\big) I_{G_2} \equiv 0,
\end{equation}
where we have abbreviated $\opR_i := \opR(q_i)$.
Therefore, the operator in \eqref{torG2} that generates the sum $S_{G_2}$ from the corresponding integral $I_{G_2}$ can be written in the multiplicative form,
\begin{equation}
\label{TOp G2}
\TOp_{G_2} =\prod_{i=1}^2 \left[ 1 + \nbe_i\big(1 - \opR_i\big) \right],
\end{equation}
\medskip
where $\nbe_i:=\nbe(q_i)$.

In this paper we shall prove that all the sums in the class $\MatsubaraSums$, to be defined in the next section, satisfy a property similar to \eqref{torG2}, with an operator of the type \eqref{TOp G2}.

\begin{Note}
It is important to notice that whereas the evaluation \eqref{intG2-result}
is still valid if the variable $N$ is extended to the real or complex domains, the same does not happen in the case of \eqref{sumG2-result}. In the latter case one finds, for $N\in\allC$,
\begin{multline}
\label{sumG2complex-result}
\sum\limits_{n =  - \infty }^\infty  {\frac{1}
{{\left( {n^2  + q_1^2 } \right)\left( {(N - n)^2  + q_2^2 } \right)}}}  = \frac{{2\pi }}
{{2q_1 2q_2 }}\left[ {\frac{{1 + \nbe(q_1 ) + \nbe(q_2 + Ni)}}
{{Ni + q_1  + q_2 }} } \right.
\\
\left. { + \frac{{\nbe(q_1 ) - \nbe(q_2 + Ni )}}
{{Ni - q_1  + q_2 }} - \frac{{\nbe(q_1 ) - \nbe(q_2 - Ni)}}
{{Ni + q_1  - q_2 }} - \frac{{1 + \nbe(q_1 ) + \nbe(q_2 - Ni)}}
{{Ni - q_1  - q_2 }}} \right].
\end{multline}
Therefore it is clear that the simple relationship \eqref{torG2} only holds in the case $N\in\allZ$. The Matsubara sums to be defined next will in general be functions of several integer variables $N_v$. We will not consider in this paper the extension of our results to the case $N_v$ complex.
\end{Note}

\section{Matsubara Sums}
\label{Matsubara Sums}

In this section we will introduce the class of sums $\MatsubaraSums$, whose elements we shall call \emph{Matsubara Sums}. These sums were considered for the first time by T.~Matsubara \cite{matsubara} in his work on the statistical mechanics of quantum fields, where they appear in connection to the evaluation of so-called Feynman diagrams. 
In order to give a general definition of the class $\MatsubaraSums$ we need to make use of some graph-theoretical terminology. 

Consider a connected graph formed by a set of $\NV$ points (also called vertices) and $\NI$ edges (also called lines or arcs). We will demand that each line joins two different vertices (that is, we exclude loops, \emph{i.e.} lines that join a vertex to itself) and that each vertex be the endpoint of at least two different lines. Any pair of vertices can be joined by more than one line.
In graph-theoretic language \cite{berge}\cite{harary}, we are considering a $(\NV,\NI)$-multigraph such that the degree of each vertex is at least 2.
%Although we could extend our class of graphs to pseudographs (by allowing loops), we shall see that their associated sums can be simply obtained from the sum corresponding to the multigraph obtained when all the loops are removed.

We will restrict our attention to graphs of the type described, which we shall call \emph{Matsubara graphs}.
\medskip

Let $G$ be a Matsubara graph with $\NV$ vertices and $\NI$ lines. 
We choose for each line a definite orientation and assign to this oriented line a positive real number $q_i$ and an integer-valued summation variable $n_i$. We assign to each vertex $v$ an integer $N_v$ and the algebraic sum $T_v:=\sum_{i} s^v_i n_i$,
where
\begin{equation}
s^v_i=
\begin{cases}
+1 & \text{if the line $i$ is oriented \emph{into} vertex $v$} \\
-1 & \text{if the line $i$ is oriented \emph{away from} vertex $v$} \\
0 & \text{if the line $i$ is not incident on vertex $v$}.
\end{cases}
\end{equation}

%where $s^v_i$ respectively equals $1,-1$ or $0$ if the line $i$ is oriented into vertex $v$, away from vertex $v$ or is not connected to vertex $v$.

The \emph{Matsubara sum} $S_G$ associated to the graph $G$ is defined as
\begin{equation}
\label{def-SG}
S_G:=
\sum\limits_{n_1  =  - \infty }^\infty
\sum\limits_{n_2  =  - \infty }^\infty
\cdots \sum\limits_{n_I  =  - \infty }^\infty  
{\frac{{\delta_G (n_1 ,n_2 , \ldots ,n_I;\{N_v\} )}}
{{\left( {n_1^2  + q_1^2 } \right)\left( {n_2^2  + q_2^2 } \right) \cdots \left( {n_I^2  + q_I^2 } \right)}}}  .
\end{equation}
Here $\delta_G(n_1 ,n_2 , \ldots ,n_I;\{N_v\}  )$ is a function valued in $\{0,1\}$ whose function is to impose a series of linear constraints among the summation variables $n_i$.
It is given explicitly by
\begin{equation}
\delta_G(n_1 ,n_2 , \ldots ,n_I;\{N_v\}  )=\prod_{v=1}^{V}\delta_{T_v,N_v},
\end{equation}
where $\delta_{m,n}$ is Kronecker delta defined in \eqref{def-Kronecker delta} above.
Whenever there is no possibility of confusion we will use the shorthand notation $\delta_G(n;N)$ to denote the object $\delta_G(n_1 ,n_2 , \ldots ,n_I;\{N_v\}  )$.

\medskip
\begin{lemma}
$S_G=0$, unless the integers $N_v$ satisfy the relation
\begin{equation}
\label{cond-sum of N's vanishes}
\sum_{v=1}^{\NV} N_v = 0.
\end{equation}
\end{lemma}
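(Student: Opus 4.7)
The plan is to exploit the fact that each line $i$ contributes to $T_v$ at exactly two vertices: once with coefficient $+1$ (the head) and once with coefficient $-1$ (the tail). This will force the sum $\sum_v T_v$ to vanish identically in the $n_i$, and the Kronecker constraints will then pin $\sum_v N_v$ to the same value.

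First I would observe that for each line $i$, the incidence coefficients $s^v_i$ satisfy
\begin{equation*}
\sum_{v=1}^{\NV} s^v_i = 0,
\end{equation*}
since by the definition of a Matsubara graph each line joins two distinct vertices, contributing $+1$ to one and $-1$ to the other, and $0$ to all remaining vertices. Interchanging the order of summation then gives
\begin{equation*}
\sum_{v=1}^{\NV} T_v
= \sum_{v=1}^{\NV} \sum_{i=1}^{\NI} s^v_i n_i
= \sum_{i=1}^{\NI} n_i \sum_{v=1}^{\NV} s^v_i = 0,
\end{equation*}
which is an identity valid for every choice of integers $n_1, \ldots, n_{\NI}$.

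Next I would look at the factor $\delta_G(n;N) = \prod_{v=1}^{\NV}\delta_{T_v, N_v}$ in the summand of \eqref{def-SG}. This product is nonzero only for those tuples $(n_1,\ldots,n_{\NI})$ with $T_v = N_v$ for every $v$; summing these equalities over $v$ gives $\sum_v N_v = \sum_v T_v = 0$ by the identity just derived. Contrapositively, if $\sum_v N_v \neq 0$, then no $(n_1,\ldots,n_{\NI})\in\allZ^{\NI}$ can satisfy all the constraints simultaneously, so $\delta_G(n;N) \equiv 0$ and hence every term of the multiple series in \eqref{def-SG} vanishes, yielding $S_G = 0$.

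There is essentially no obstacle here: the argument is just the standard observation that the rows of an oriented incidence matrix sum to zero, combined with the consistency requirement for the linear constraints imposed by the Kronecker deltas. The only thing to be careful about is making explicit use of the assumption (built into the definition of a Matsubara graph) that no line is a loop, since for a loop one would have $s^v_i = +1 - 1 = 0$ at the single incident vertex but the conclusion would formally still hold; the two-distinct-endpoints condition is what guarantees the clean cancellation $\sum_v s^v_i = 0$ line by line.
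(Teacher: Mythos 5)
Your proof is correct and follows essentially the same route as the paper's: each line contributes $+n_i$ at one endpoint and $-n_i$ at the other, so $\sum_v T_v\equiv 0$, and the constraints $T_v=N_v$ then force $\sum_v N_v=0$ for any nonvanishing term. The only difference is that you phrase the cancellation via the column sums $\sum_v s^v_i=0$ of the incidence coefficients, which is just a slightly more formal rendering of the paper's argument.
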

\begin{proof}
The $\NV$ equations $T_v = N_v$ have to be satisfied simultaneously for the sum $S_G$ not to vanish. Now, each summation variable $n_i$ (associated to line $i$) appears in only two of these equations (those corresponding to the vertices on which line $i$ is incident), once in the form $+n_i$ and once in the form $-n_i$. Therefore, $\sum_v T_v \equiv 0$ and
\begin{equation*}
\sum_v N_v = \sum_v T_v = 0. 
\end{equation*}
\end{proof}

Hence, in order for the Matsubara sums to be considered not to vanish identically, we shall always assume that the condition \eqref{cond-sum of N's vanishes} holds.
\medskip

We now give a few examples of Matsubara sums.

\begin{example}
The simplest non-trivial example of a Matsubara sum corresponds to the $(2,2)$-graph $G_2$ containing 2 vertices joined together by two lines, with $N_1+N_2\equiv 0$, shown in figure \ref{fig-G2}. Setting $N_1=N$, the Matsubara sum for $G_2$ is simply the sum $S_{G_2}$ defined in \eqref{def-sumG2} in the Introduction.
\end{example}
\begin{figure}
\centerline{\epsfig{file=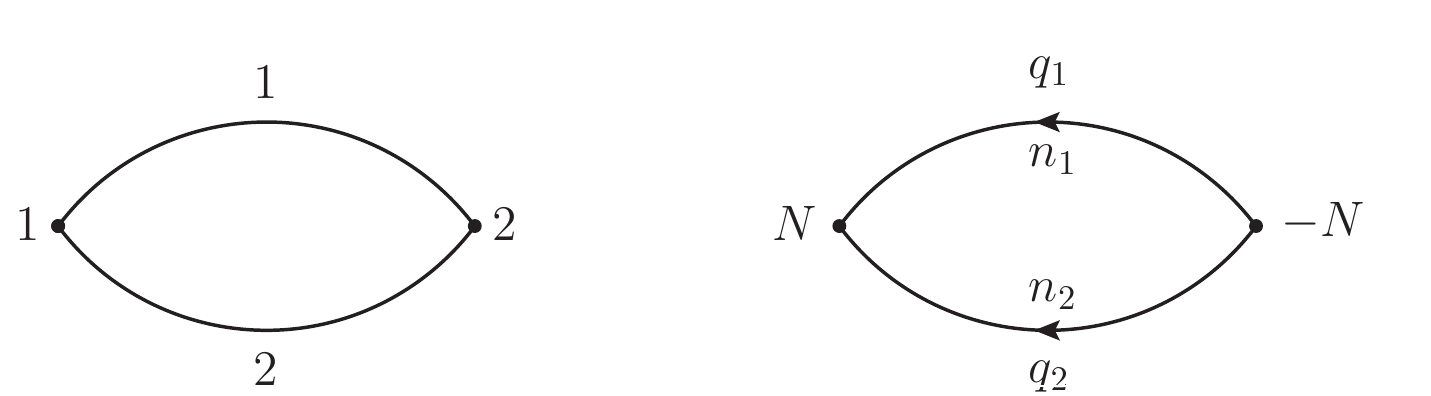,height=2.8cm,angle=0}}
\caption[]{The graph $G_2$, with labeled vertices and lines (left) and its Matsubara sum variables (right).}
\label{fig-G2}
\end{figure}

\medskip

\begin{example}
A slightly more complicated example of a Matsubara sum is the one associated to the $(2,3)$-graph $G_3$, consisting of two vertices joined now by three lines, represented in figure \ref{fig-G3}.
\begin{figure}
\centerline{\epsfig{file=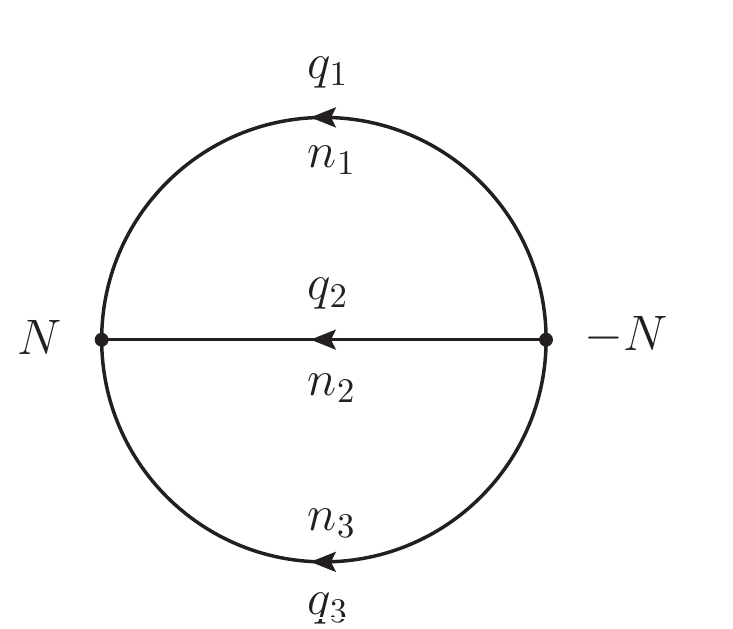,height=4.5cm,angle=0}}
\caption[]{The graph $G_3$ with its Matsubara sum variables.}
\label{fig-G3}
\end{figure}
Again we require $N_2 = -N_1 \equiv -N$, in which case the constraints at the vertices reduce to the single equation $n_1+n_2+n_3=N$. Thus,
\begin{equation}
S_{G_3}(N,q_1,q_2,q_3)=
\sum\limits_{n_1 } {\sum\limits_{n_2 } {\frac{1}
{{\left( {n_1^2  + q_1^2 } \right)\left( {n_2^2  + q_2^2 } \right)\left( {(N - n_1  - n_2 )^2  + q_3^2 } \right)}}} } ,
\end{equation}
where it is from now on understood that the summation variables $n_i$ run from $-\infty$ to $+\infty$.

As it was the case for the sum $S_{G_2}$, it turns out that $S_{G_3}$ can also be generated in a simple way from the corresponding integral $I_{G_3}$, obtained replacing the double sum by a double integral,
\begin{align}
I_{G_3}(N,q_1,q_2,q_3) &=
\int_{-\infty }^{\infty} dx_1 \int_{-\infty }^{\infty} dx_2 {\frac{1}
{{\left( {x_1^2  + q_1^2 } \right)\left( {x_2^2  + q_2^2 } \right)\left( {(N - x_1  - x_2 )^2  + q_3^2 } \right)}} }
\notag\\
&= \frac{{(2\pi)^2 }}
{{2q_1 2q_2 2q_3}}\left[ {\frac{1}
{{Ni + q_1  + q_2 + q_3}} - \frac{1}
{{Ni - q_1  - q_2 - q_3}}} \right].
\end{align}
In this case, after a lengthy evaluation, we find,
\begin{equation}
S_{G_3}(N,q_1,q_2,q_3)=\TOpp_{G_3}(q_1,q_2,q_3) I_{G_3}(N,q_1,q_2,q_3),
\end{equation}
where $\TOpp_{G_3}$ is the operator,
\begin{align}
\label{eq-opG3}
\TOpp_{G_3} = 1 &+\nbe(q_1)\big( 1-\opR(q_1) \big)
+ \nbe(q_2)\big( 1-\opR(q_2) \big) + \nbe(q_3)\big( 1-\opR(q_3) \big)
\cr
&+ \nbe(q_1)\nbe(q_2)\big( 1-\opR(q_1) \big)\big( 1-\opR(q_2) \big) 
\cr
&+ \nbe(q_1)\nbe(q_3)\big( 1-\opR(q_1) \big)\big( 1-\opR(q_3) \big) 
\cr
&+ \nbe(q_2)\nbe(q_3)\big( 1-\opR(q_2) \big)\big( 1-\opR(q_3) \big).
\end{align}

\bigskip

As in the case of $G_2$, it is direct to show that the operator $\big(1 - \opR_1\big)\big(1 - \opR_2\big)\big(1 - \opR_3\big)$ annihilates $I_{G_3}$:
\begin{equation}
\big(1 - \opR_1\big)\big(1 - \opR_2\big)\big(1 - \opR_3\big) I_{G_3} \equiv 0.
\end{equation}
Therefore, the operator that generates the sum $S_{G_3}$ from the corresponding integral $I_{G_3}$ can be written in the multiplicative form,
\begin{equation}
\TOp_{G_3} =\prod_{i=1}^3 \left[ 1 + \nbe_i\big(1 - \opR_i\big) \right].
\end{equation}
\medskip
\end{example}

We will prove in the following sections that the types of relationship just described are generic for Matsubara sums.

\begin{figure}
\centerline{\epsfig{file=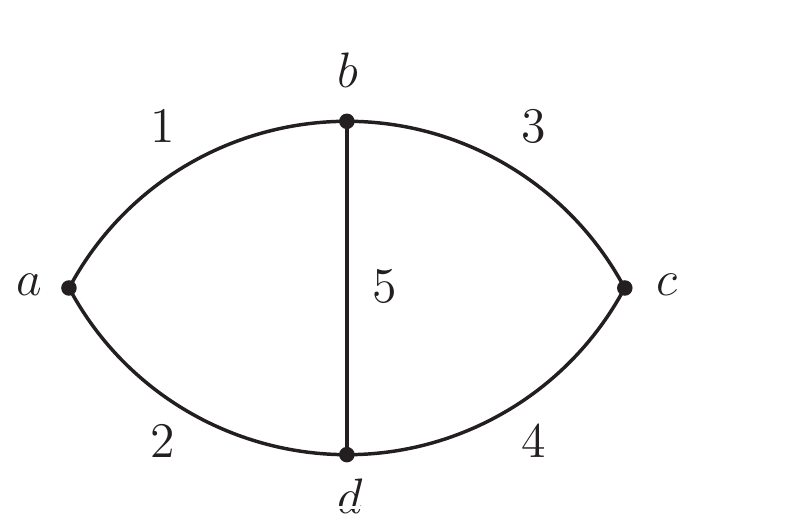,height=3.7cm,angle=0}}
\caption[]{The graph $G_4$, with labeled vertices and lines. The orientation of the lines, not shown, is described in the text.}
\label{fig-G4}
\end{figure}
\begin{example}
As a final, more intricate example we consider the Matsubara sum associated to the graph $G_4$, represented in figure \ref{fig-G4}. $G_4$ is a $(4,5)$-graph. Its Matsubara sum will be a function of 3 integer variables, $N_a, N_b, N_c$, and 5 real positive variables, $q_1,\ldots,q_5$:
\begin{equation}
\label{def-sumG4}
S_{G_4}:=
\sum\limits_{n_1 , \ldots ,n_5 } {\frac{{\delta_{n_1  + n_2 ,N_a } \delta_{n_3  - n_1  - n_5 ,N_b } \delta_{ - n_3  - n_4 ,N_c } }}
{{(n_1^2  + q_1^2 )(n_2^2  + q_2^2 )(n_3^2  + q_3^2 )(n_4^2  + q_4^2 )(n_5^2  + q_5^2 )}}}, 
\end{equation}
The orientation of the lines has been chosen such that line 5 flows from top to bottom and all the rest flow from right to left. Solving for the constraints imposed by the Kronecker deltas, we find that $S_{G_4}$ is equivalent to the double sum
\begin{multline}
\label{eq-sumG4}
\sum_{n=-\infty}^{\infty}\sum_{m=-\infty}^{\infty}
%\limits_{n,m}
\left\{
{\frac{1}{{(n^2  + q_1^2 )(m^2  + q_5^2 )}}} \right.\\
\times
\left.
\frac{1}{((N_a  - n)^2  + q_2^2 )((N_b  + n + m)^2  + q_3^2 )((N_b  + N_c  + n + m)^2  + q_4^2 )} \right\}.
\end{multline}
We shall present the explicit evaluation of this sum in section \ref{sec-calculation of S4}.
\end{example}

\section{The main theorems}
\label{sec-main-theorems}

Here we state the main theorems concerning the explicit evaluation of Matsubara sums.
These results were first conjectured \cite{espinosa+stockmeyer-2004} and then proved \cite{espinosa-2005} in the context of thermal quantum field theory. Our proofs of theorems \ref{thm-main-1} and \ref{thm-main-2} and of lemma \ref{lemma:cutsets} rely on the explicit evaluation of both the Matsubara sum and the Matsubara integral associated to a general graph $G$, which will be given in the next section.

\begin{theorem}
\label{thm-main-1}
Let $G$ be a Matsubara graph. Let $\{q_i\}, i=1,\ldots,I$ be the set of positive real values associated to the lines of $G$ and $\{N_v\}, v=1,\ldots,V$ be the set of integer values associated to the vertices of $G$. We assume the condition $\sum_v N_v = 0$ to hold. Then the Matsubara sum of $G$, $S_G(\{N_v\},\{q_i\})$, can be explicitly evaluated as
\begin{equation}
S_G(\{N_v\},\{q_i\}) = \TOp_G(\{q_i\}) I_G(\{N_v\},\{q_i\}),
\end{equation}
where
\begin{equation}
\TOp_G(\{q_i\}) = \prod_{i=1}^I \left[ 1 + \nbe_i\big(1 - \opR_i\big) \right],
\end{equation}
and $I_G(\{N_v\},\{q_i\})$ is the Matsubara integral of $G$.
\end{theorem}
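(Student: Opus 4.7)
The proof proceeds by computing both sides of the asserted identity in closed form and then matching them. The key input, to be developed in the next section, is that the Matsubara integral $I_G$ admits a closed evaluation as a sum over sign assignments $\epsilon\in\{\pm 1\}^{I}$ to the lines of $G$, and that Gaudin's tree formula (invoked in Section~\ref{sec-explicit evaluations}) expresses $S_G$ as a sum of the same rational pieces, each dressed by a particular product of Bose--Einstein factors $\nbe_i$. The theorem then amounts to the statement that this dressing is implemented precisely by the single operator $\TOp_G$.

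First, I would obtain an explicit evaluation of $I_G$. Using the partial fraction
\[
\frac{1}{x_i^{2}+q_i^{2}} = \frac{1}{2iq_i}\left[\frac{1}{x_i-iq_i}-\frac{1}{x_i+iq_i}\right],
\]
expand the integrand as a sum over sign patterns $\epsilon\in\{\pm 1\}^{I}$. The vertex constraints supply $V-1$ independent linear relations among the $x_i$ (the remaining one is automatic by the preceding lemma and $\sum_v N_v = 0$), so after solving for $V-1$ of the integration variables the other $I-V+1$ integrations can be done by iterated residues. The outcome has the schematic form
\[
I_G = \frac{(2\pi)^{V-1}}{\prod_i 2q_i}\sum_{\epsilon} D_G(\epsilon;\{N_v\},\{q_i\}),
\]
where $D_G(\epsilon;\cdot)$ is a product of linear factors of the type $(iN_v + \text{signed sum of }q_i)^{-1}$ that is supported only on suitably ``balanced'' $\epsilon$. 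Running Gaudin's contour-residue calculation on $S_G$ yields the same denominator pieces, but each now weighted by an explicit $\nbe$-monomial determined by which lines carry $\epsilon_i = -1$ relative to a reference spanning tree of $G$.

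Having both closed forms, I would expand
\[
\TOp_G = \sum_{S\subseteq\{1,\ldots,I\}}\prod_{i\in S}\nbe_i\prod_{i\in S}(1-\opR_i),
\]
apply it to the explicit $I_G$, and note that each $(1-\opR_i)$ acts on a summand $D_G(\epsilon;\cdot)$ by subtracting its image under $\epsilon_i\mapsto -\epsilon_i$. The action of $\TOp_G$ on $I_G$ therefore produces a double sum over subsets $S$ and sign patterns $\epsilon$ that, after reorganization by effective sign pattern and repeated use of the identity $\nbe(q)+\nbe(-q)+1=0$ from \eqref{nbe-identity-1}, should reproduce the Gaudin expansion term by term.

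The main obstacle is the combinatorial bookkeeping in this last step: $\TOp_G$ generates $O(3^{I})$ contributions, whereas the Gaudin formula contains one term per spanning tree. Reconciling the two counts requires a cancellation lemma of the shape $\prod_{i\in C}(1-\opR_i)I_G\equiv 0$ whenever $C$ contains a cutset of $G$, foreshadowed in the examples by the identities $(1-\opR_1)(1-\opR_2)I_{G_2}\equiv 0$ and $(1-\opR_1)(1-\opR_2)(1-\opR_3)I_{G_3}\equiv 0$, and presumably made precise in Lemma~\ref{lemma:cutsets}. Once that cancellation is in hand, the only subsets $S$ that contribute are those complementary to a spanning tree, and a careful accounting using $\nbe(-q)=-1-\nbe(q)$ converts each surviving coefficient into the correct Gaudin tree weight. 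This combinatorial matching, rather than the residue computations themselves, is where I expect the bulk of the technical work to live.
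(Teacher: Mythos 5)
Your overall strategy---evaluate both $S_G$ and $I_G$ in closed form via Gaudin's method and then check that $\TOp_G$ carries one onto the other---is the paper's strategy too, but the step you explicitly defer (``the combinatorial matching \ldots is where I expect the bulk of the technical work to live'') is precisely the content of the proof, and the way you sketch it would not go through. The paper never performs a global match between an $O(3^{\NI})$-term expansion of $\TOp_G I_G$ and the Gaudin expansion of $S_G$. Instead it organizes \emph{both} quantities as sums over the \emph{same} spanning trees, $I_G=\sum_{\T} I_G^{\T}$, and proves the operator identity tree by tree and line by line: for each $l\in\Tbar$ the pointwise identity $\nbe(q)=-\Heaviside(-q)+\sign(q)\nbe(\abs{q})$ (Lemma \ref{lemma-2}) converts the factor $2\pi\eps_l\nbe(\eps_l q_l)$ occurring in the sum into the factor $-2\pi\eps_l\Heaviside(-\eps_l q_l)$ occurring in the integral plus an operator correction, which yields $S_G=\sum_{\T}\prod_{l\in\Tbar}[1+\nbe_l(1-\opR_l)]I_G^{\T}$ (Lemma \ref{lemma-3}); the product over $\Tbar$ is then extended to all lines because each $(1-\opR_j)$ with $j\in\T$ annihilates $I_G^{\T}$ via $(1-\opR)\frac{1}{q}(1-\opR)\equiv 0$, after which the operator is tree-independent and factors out of $\sum_{\T}$. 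This per-tree factorization is the key idea, it is absent from your outline, and without it your ``reorganization by effective sign pattern'' is an unproved combinatorial identity rather than a proof. Note also that the cutset lemma plays no role in establishing Theorem \ref{thm-main-1}; it is only needed afterwards, to prune $\TOp_G$ down to $\TOpp_G$ in Theorem \ref{thm-main-2}.

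Two of the concrete claims you do commit to are wrong. The prefactor of your schematic $I_G$ should be $(2\pi)^{\NI-\NV+1}$ (one factor per residue integration), not $(2\pi)^{\NV-1}$; these coincide for $G_2$ but already disagree for $G_3$ and $G_4$ (compare \eqref{eq-I4-result}). And after the cutset cancellation the surviving subsets $S$ are \emph{all} non-cutsets---including $S=\emptyset$ and all smaller tuples up to size $\NL$---not only the complements of spanning trees; discarding $S=\emptyset$ alone would already lose the $I_{G_2}$ piece of $S_{G_2}$ visible in \eqref{torG2}. Finally, $(1-\opR_i)$ does not act on a summand of $I_G$ simply by flipping $\eps_i$: the prefactor $\prod_k(2q_k)^{-1}$ is odd under $\opR_k$, and the resulting interchange between $(1+\opR)$ and $(1-\opR)$, handled in the paper by $\frac{1}{q}(1+\opR(q))f(q)=(1-\opR(q))\left[\frac{1}{q}f(q)\right]$, is essential to getting the signs right.
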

Upon expansion of the product, the operator $\TOp_G$ can be seen to contain one or more terms that individually annihilate the Matsubara integral $I_G$, so that actually the operator $\TOp_G$ can be made ``smaller''. To state this result we need the following graph-theoretical definition \cite{harary}:
\begin{definition}
A {\em cutset} of the (connected) graph $G$ is a set of lines whose removal from the graph results in a disconnected graph.
\end{definition}
\begin{lemma}
\label{lemma:cutsets}
Let $C$ be a cutset of the graph $G$. Then the operator
\begin{equation}
\hat{\mathcal{A}}_{C}=\prod_{i\in C}\big(1 - \opR_i\big)
\end{equation}
annihilates the Matsubara integral $I_G$ associated to the graph $G$.
\end{lemma}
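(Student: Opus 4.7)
The plan is to base the proof on the explicit closed-form evaluation of $I_G$ that the next section provides; the lemma then reduces to an algebraic statement about that formula. From the worked cases $G_2$ and $G_3$ exhibited in the Introduction, one expects $I_G$ to admit the form
\[
I_G(\{N_v\},\{q_i\}) \;=\; \frac{(2\pi)^{L}}{\prod_{i=1}^{I}2q_i}\, h(\{N_v\},\{q_i\}),
\]
with $L = I-V+1$ the number of independent loops and $h$ a rational function whose denominator factors are linear forms $\pm iN_v \pm q_{j_1} \pm \cdots \pm q_{j_r}$ with sign patterns controlled by consistent orientations of the edges of $G$.

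To exploit the cutset, let $G_1,\ldots,G_k$ be the connected components of $G\setminus C$ and, for each component $r$ and each $i\in C$, set $\sigma_i^{(r)}\in\{+1,-1,0\}$ according to whether the oriented edge $i$ leaves, enters, or is not incident on $G_r$. Summing the vertex relations $T_v=N_v$ over $v\in G_r$ cancels every edge internal to $G_r$ and yields the linear identity
\[
\sum_{i\in C}\sigma_i^{(r)} x_i \;=\; M_r \;:=\; \sum_{v\in G_r} N_v
\qquad (r=1,\ldots,k),
\]
with only $k-1$ independent (by $\sum_v N_v=0$). Read in the closed form for $I_G$, this means that each summand of $h$ depends on the cut variables $\{q_i\}_{i\in C}$ only through the signed combinations $\alpha_r := \sum_{i\in C}\sigma_i^{(r)} q_i$ (possibly shifted by the remaining $q_j$'s) and is an odd rational function of each $\alpha_r$, because the sign patterns appearing in the closed form occur in orientation-reversal pairs across the cut. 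Granted this, expanding $\prod_{i\in C}(1-\opR_i)I_G$ produces a sum over $S\subseteq C$ with sign $(-1)^{|S|}$ coming from the $1/\prod_i q_i$ prefactor; pairing each $S$ with its complement $C\setminus S$ and using the oddness of each summand of $h$ cancels every term.

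The main obstacle is precisely the structural claim in the middle step: extracting from the general closed-form evaluation the assertion that the cut variables occur in $h$ only through the combinations $\alpha_r$, and oddly. For $G_2$ and $G_3$ this is immediate from the two-term denominators written in the Introduction, but for a general Matsubara graph with $L\ge 2$ loops and a non-minimal cutset it requires careful bookkeeping of every summand of $h$, most naturally organized either by induction on $L$ (reducing one loop by an explicit contour integration) or via a parametric representation of $I_G$ in which the cutset factorization is manifest from the start.
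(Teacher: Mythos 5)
Your outline stalls exactly where you say it does, and the gap is not merely a matter of bookkeeping: the structural claim at its core is false as stated. You assert that each summand of the closed-form evaluation depends on the cut variables $\{q_i\}_{i\in C}$ only through the signed combinations $\alpha_r=\sum_{i\in C}\sigma_i^{(r)}q_i$, and oddly. The paper's own explicit evaluation of $I_{G_4}$ in \eqref{eq-I4-result} already contradicts this: $\{1,2\}$ is a cutset of $G_4$ (it isolates the vertex carrying $N_a$; the paper notes this when discussing \eqref{eq-opG4}), yet summands such as the one with denominator $(i N_b+q_1+q_3+q_5)\,(i(N_a+N_b)+q_2+q_3+q_5)\,(i(N_a+N_b+N_c)+q_2+q_4+q_5)$ contain $q_1$ and $q_2$ in separate linear forms, not only through $q_1+q_2$. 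The linear identity $\sum_{i\in C}\sigma_i^{(r)}x_i=M_r$ constrains the \emph{integration} variables, but it does not force the $q$-dependence of the evaluated integral to organize itself into the combinations $\alpha_r$ term by term, and your pairing $S\leftrightarrow C\setminus S$ has nothing to act on without that.

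The mechanism the paper actually uses is different: it is local in a single variable rather than global in a signed combination. Lemma \ref{lemma-1} decomposes $I_G=\sum_{\T}I_G^{\T}$ over the spanning trees of $G$, with
\begin{equation*}
I_G^{\T}=(2\pi)^{\NL}\prod_{k=1}^{\NI}\frac{1}{2q_k}\;\prod_{j\in\T}\big(1-\opR_j\big)\frac{1}{q_j-i\Omega_j^{\T}(N,i\eps_l q_l)} .
\end{equation*}
For each tree line $j\in\T$ the combination $\frac{1}{q_j}\big(1-\opR_j\big)(\cdots)$ is an \emph{even} function of $q_j$, so $\big(1-\opR_j\big)I_G^{\T}=0$; equivalently $\big(1-\opR_j\big)\frac{1}{q_j}\big(1-\opR_j\big)=\frac{1}{q_j}\big(1+\opR_j\big)\big(1-\opR_j\big)\equiv 0$. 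The only graph theory then needed is the observation that a cutset cannot be contained in the complement $\Tbar$ of any spanning tree (otherwise removing it would leave the connected subgraph $\T$ intact), so $C$ meets every $\T$ in at least one line $k$, and the factor $\big(1-\opR_k\big)$ in $\hat{\mathcal{A}}_C$ annihilates $I_G^{\T}$. If you want to salvage your route, the grouping of summands you are missing is precisely this tree decomposition; but once you have it, the single-variable evenness argument is both simpler than the oddness-and-pairing scheme and already complete.
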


\begin{theorem}
\label{thm-main-2}
Let the conditions of theorem \ref{thm-main-1} hold and let $\NL:=\NI-\NV+1$ be the number of independent cycles of the graph $G$. Then the relationship between the Matsubara sum $S_G(\{N_v\},\{q_i\})$ and integral $I_G(\{N_v\},\{q_i\})$ can alternatively be written as
\begin{equation}
S_G(\{N_v\},\{q_i\}) = \TOpp_G(\{q_i\}) I_G(\{N_v\},\{q_i\}),
\end{equation}
where
\begin{equation}
\label{eq:thermal-operator-form2}
\begin{split}
\TOpp_G(\{q_i\}):=1+&\sum_{i=1}^\NI  \nbe_i\big(1-\opR_i\big)
+\sideset{}{^\prime}\sum_{\langle i_1,  i_2\rangle} \nbe_{i_1} \nbe_{i_2}
\big(1-\opR_{i_1}\big)\big(1-\opR_{i_2}\big)\\
+& \dots + \sideset{}{^\prime}\sum_{\langle i_1, \dots,  i_{\NL}\rangle}
\prod_{l=1}^{\NL} \nbe_{i_l}\big(1-\opR_{i_l}\big).
\end{split}
\end{equation}
Here the indices $i_1, i_2, \ldots$ run
from 1 to $\NI$ (the number of lines of the graph $G$) and the symbol
$\langle i_1, \dots,  i_k\rangle$ stands for an unordered
$k$-tuple with no repeated indices, representing a particular set
of lines. The prime on the summation symbols imply that
the tuples that are cutsets of the graph $G$ are to be excluded from the sums.
\end{theorem}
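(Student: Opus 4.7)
My plan is to derive Theorem \ref{thm-main-2} directly from Theorem \ref{thm-main-1} and Lemma \ref{lemma:cutsets} by expanding the product operator $\TOp_G$ and discarding every summand that annihilates $I_G$. First I would expand
\[
\TOp_G \;=\; \prod_{i=1}^{\NI}\bigl[1 + \nbe_i(1-\opR_i)\bigr]
\;=\; \sum_{S \subseteq \{1,\ldots,\NI\}} \Bigl(\prod_{i\in S}\nbe_i\Bigr) \prod_{i\in S}(1-\opR_i).
\]
This expansion is unambiguous because distinct factors commute: $\nbe_i$ and $\opR_i$ both depend only on the variable $q_i$, so the $i$-th factor commutes with the $j$-th for every $j\neq i$, and we are free to pull all the $\nbe_i$'s out to the left.

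Next I would invoke Lemma \ref{lemma:cutsets}: for any cutset $C$, the operator $\prod_{i\in C}(1-\opR_i)$ annihilates $I_G$. Since the operators $(1-\opR_i)$ mutually commute, any $S$ that contains a cutset $C$ factors as $\prod_{i\in S\setminus C}(1-\opR_i)\cdot \prod_{i\in C}(1-\opR_i)$, so the corresponding summand applied to $I_G$ vanishes. Because every superset of a disconnecting edge set is itself disconnecting, this is equivalent to the statement that the $S$-term survives only when $S$ is not a cutset of $G$.

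The final step is to bound the surviving cardinalities. If $|S|>\NL:=\NI-\NV+1$, then $|S^c|<\NV-1$, and the spanning subgraph with edge set $S^c$ has too few edges to be connected; hence $S$ is a cutset and the term dies. Only subsets with $|S|\le\NL$ survive, and grouping them by cardinality reproduces exactly \eqref{eq:thermal-operator-form2}, with the primes amounting to the omission of the (automatically vanishing) cutset tuples at each level.

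The main obstacle is entirely encapsulated in Lemma \ref{lemma:cutsets}; once the annihilation statement is in hand via the explicit evaluation of $I_G$ promised in the next section, the rest is combinatorial bookkeeping. The only mild care point is confirming that the supersets-of-cutsets-are-cutsets principle holds for the definition of cutset used here, which is immediate since removing further edges from an already disconnected graph cannot reconnect it.
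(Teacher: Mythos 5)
Your proposal is correct and follows essentially the same route as the paper: expand the product form of $\TOp_G$ from Theorem \ref{thm-main-1}, discard the terms indexed by cutsets using Lemma \ref{lemma:cutsets}, and observe that any set of more than $\NL$ lines is automatically a cutset, which truncates the expansion at degree $\NL$. Your explicit check that supersets of cutsets are cutsets is a small point the paper leaves implicit, but the argument is otherwise the same.
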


Note that the operator $\TOpp_G(\{q_i\})$ contains products of at most
$\NL$ kernel factors $\nbe(q_i)$, since for a graph with $\NL$ independent cycles
the maximum number of lines that can be removed without
disconnecting the graph is precisely $\NL$.
\footnote{The number of independent cycles of a graph $G$ is called the \emph{cycle rank} or \emph{cyclomatic number} in graph theory and is given by $m(G)=\NI-\NV+1$ if $G$ is connected \cite{harary}.}

\section{Explicit evaluations of $S_G$ and $I_G$}
\label{sec-explicit evaluations}
We notice that the main building block of a Matsubara sum can be expressed as
\begin{align}
\frac{1}{{n^2  + q^2 }} &= \frac{1}{{2q}}
\left[ {\frac{1}{{in + q}} - \frac{1}{{in - q}}} \right]\cr
&=\frac{1}{{2q}}\big(1-\opR(q)\big)\frac{1}{{q - in}}.
%&=\frac{1}{{2q}}(1-\opR(q))\frac{1}{{in + q}}.
\end{align}
The representation above would allow us trade the original quadratic denominators in the Matsubara sum \eqref{def-SG} for linear denominators (in the summation variables $n_i$), and express $S_G$ in the form
\begin{equation}
\label{SG-gaudin}
S_G=\prod\limits_{k = 1}^\NI {\frac{1}
{{2q_k }}\big(1 - \opR_k \big)} \sum\limits_{n_1 , \ldots ,n_\NI } {\frac{{\delta_G (n_1 ,n_2, \ldots ,n_\NI; \{N_v\} )}}
{{(q_1 - in_1 )(q_2 - in_2 ) \cdots ( q_\NI - in_\NI  )}}},
\end{equation}
were it not for the fact the new sum does not converge in general. However, as we shall see, it is possible to regulate the sum in \eqref{SG-gaudin} in such a way that it is well defined.
%provided, of course, the new sum converges.
\medskip

The main results that will allow us to obtain a complete evaluation of the Matsubara sum $S_G$ were obtained by M.~Gaudin \cite{gaudin} long ago, and will be reviewed in this section, adapted to the context of this paper.
Gaudin showed that the summand in \eqref{SG-gaudin} admits a decomposition into partial fractions, which allows us to systematically eliminate the constrains imposed by the delta function $\delta_G(n;N)$ and perform the sum explicitly.

The generalized Kronecker
delta $\delta_G(n;N)$ enforces $\NV-1$ independent linear relations
satisfied by the summation variables $n_i$, also involving the
vertex parameters $N_v$, which we shall write as
\begin{align}
\label{R:vertex constraints}
R_v (N,n) = 0,\quad\text{for }v = 1, \ldots ,\NV - 1.
\end{align}

This system of linear equations allows us to solve for $\NV - 1$
of the $\NI$ summation variables in terms of a set of $\NL=\NI-\NV+1$
independent ones. In general, there will be several distinct ways of
choosing this set of independent summation variables.
As shown by Gaudin\cite{gaudin}, there is a one-to-one correspondence between
the collection of all possible sets of independent summation variables and
the set of all \emph{trees} associated to the given (connected) graph
$G$.

A tree\footnote{More accurately, a \emph{spanning} tree, in the graph-theoretical terminology.}
is a set of lines of $G$ joining all vertices and making a connected
graph with no cycles. Every tree $\T$ will contain $\NV-1$ lines and its
complement $\Tbar$ (the set of lines of $G$ which do not belong to $\T$) will
have $\NL$ lines. The summation variables corresponding to the $\NL$ lines in
$\Tbar$, denoted by $n_l$, will constitute a set of independent summation variables in terms of which the system \eqref{R:vertex constraints} can be solved.
The summation variables
associated with the lines of the tree, $n_j$, with $j\in\T$,
will be linear combinations of the independent summation variables
and the vertex parameters,
\begin{equation}
n_j=\Omega_j^{\T}(N,n_l),\quad j\in{\T}, l\in{\Tbar}.
\end{equation}
\begin{figure}
\centerline{\epsfig{file=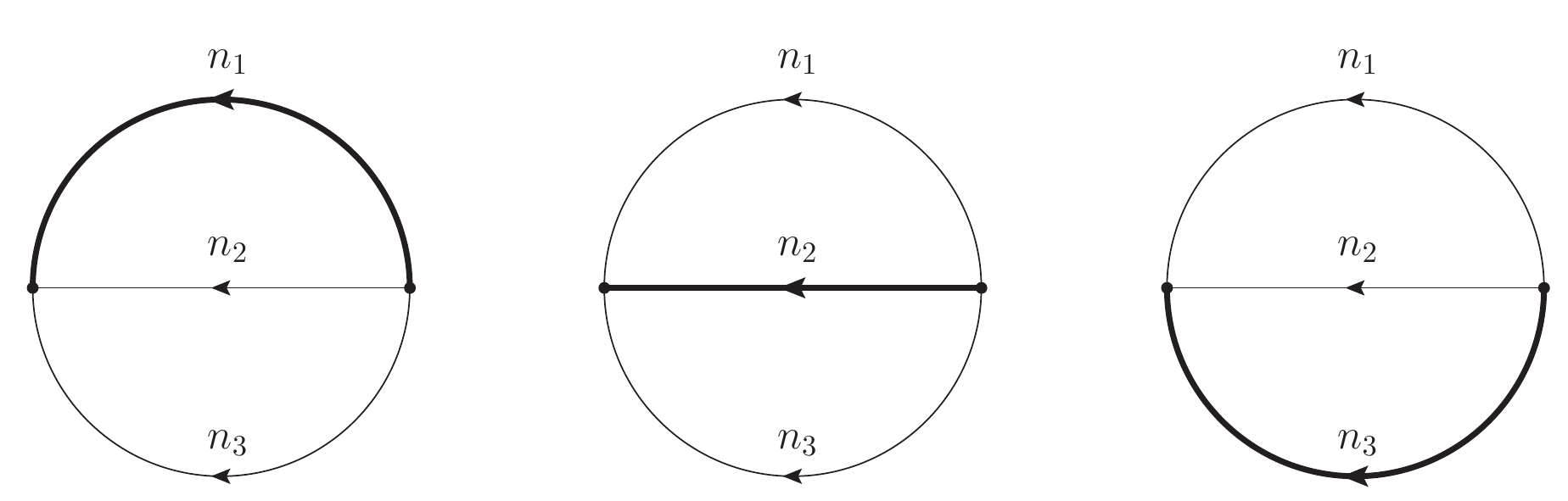,height=3.8cm,angle=0}}
\caption[]{The trees of the graph $G_3$, shown as dark lines.}
\label{fig-G3trees}
\end{figure}

As a simple example, in figure \ref{fig-G3trees} we show the three
possible trees for the $(2,3)$-graph $G_3$.
In this case, each tree $\T$ is composed by a single line
(heavy line), whose summation variable can be expressed, after solving for the constraint at one of the vertices imposed by the delta function, in terms
of the two independent summation variables associated with the
(thin) lines that do not belong to the tree (these are the lines in $\Tbar$) and the vertex parameter $N$. For instance, for the first tree we have
$n_1 = N - n_2 - n_3$, etc.

\begin{figure}
\centerline{\epsfig{file=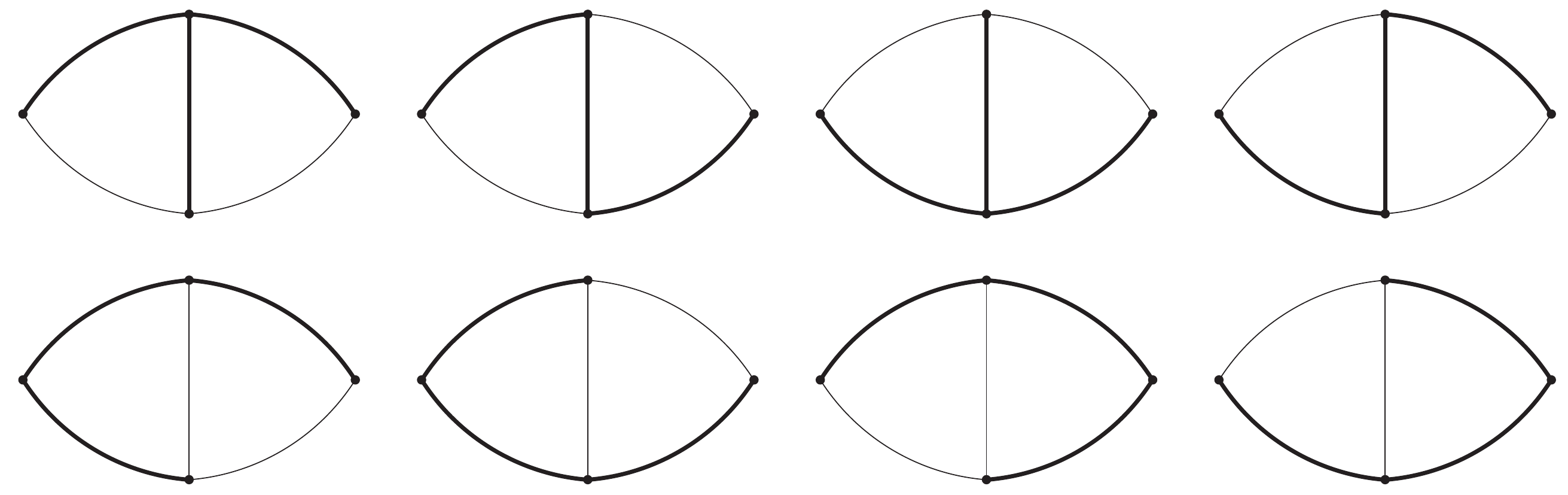,width=12cm,angle=0}}
\caption[]{The 8 trees of the graph $G_4$, represented by the dark lines.}
\label{fig-G4trees}
\end{figure}

Similarly, for the $(4,5)$-graph $G_4$ of figure \ref{fig-G4} there exist 8 trees, shown in figure \ref{fig-G4trees}. The system of equations determined by the Kronecker deltas in the definition \eqref{def-sumG4} of $S_{G_4}$ can be solved in terms of 8 different sets of independent variables, one for each of the 8 trees. For instance, the solution that led to the double sum \eqref{eq-sumG4}, $n_2=N_a-n_1$, $n_3=N_b+n_1+n_5$ and $n_4=-N_b-N_c-n_1-n_5$, corresponds to the last tree in figure \ref{fig-G4trees} (the one in the bottom-right corner).
\medskip

Gaudin's main insight is the following identity for the rational function
appearing in \eqref{SG-gaudin}:
\begin{equation}
\label{gaudin-identity}
\prod\limits_{k = 1}^\NI {\frac{1}{{q_k - in_k }}}\delta_G(n;N)
= \sum\limits_{\T} {\prod\limits_{j \in \T} {\frac{1}
{{q_j  - i\Omega _j^{\T} (N, -iq_l )}}} }
\prod\limits_{l \in\Tbar} {\frac{1}{{q_l - in_l}}}\delta_G(n;N).
\end{equation}

For instance, for the case of the Matsubara sum $S_{G_3}$ this identity takes the
form
\begin{multline}
\label{gaudin-identity-G2}
{\frac{{\delta \left( {N - n_1  - n_2  - n_3 } \right)}}
{{\left( {q_1  - in_1 } \right)\left( {q_2  - in_2 } \right)\left( {q_3  - in_3 } \right)}}}  = \frac{{\delta \left( {N - n_1  - n_2  - n_3 } \right)}}
{{\left( {q_1  + q_2  + q_3  - iN} \right)}}
\\
\times \left\{ { {\frac{1}
{{\left( {q_2  - in_2 } \right)\left( {q_3  - in_3 } \right)}}}  + {\frac{1}
{{\left( {q_1  - in_1 } \right)\left( {q_3  - in_3 } \right)}}}  + {\frac{1}
{{\left( {q_1  - in_1 } \right)\left( {q_2  - in_2 } \right)}}} }\right\}.
\end{multline}
We note that the identity \eqref{gaudin-identity} holds only in the presence of the constraint $\delta_G(n;N)$ on the variables $n_i$. For instance, in the example above, 
\begin{multline*}
{ {\frac{1}
{{\left( {q_2  - in_2 } \right)\left( {q_3  - in_3 } \right)}}}  + {\frac{1}
{{\left( {q_1  - in_1 } \right)\left( {q_3  - in_3 } \right)}}}  + {\frac{1}
{{\left( {q_1  - in_1 } \right)\left( {q_2  - in_2 } \right)}}} } =
\\
\frac{q_1 + q_2 + q_3 - i(n_1 + n_2 + n_3)}{( {q_1  - in_1 } )( {q_2  - in_2 } )( {q_3  - in_3 } )},
\end{multline*}
so that the identity \eqref{gaudin-identity-G2} holds only if the constraint $n_1 + n_2 + n_3 = N$ is imposed.
\medskip

Once we have applied Gaudin's identity \eqref{gaudin-identity} to the Matsubara sum \eqref{SG-gaudin}, we proceed to perform the sum over the variables $n_i$. Since, for each tree, the summation variables $n_j,j\in\T$, appear now only in the constraint $\delta_G(n;N)$, we have formally
\begin{equation}
\sum\limits_{n_1 , \ldots ,n_I } {\prod\limits_{l \in \Tbar} {\frac{1}
{{q_l  - in_l }}} \delta_G (n;N)}  = \sum\limits_{ n_l : l \in \Tbar} {\prod\limits_{l \in \Tbar} {\frac{1}{{q_l  - in_l }}} } =
{\prod\limits_{l \in \Tbar}}\sum\limits_{n_l} {\frac{1}{{q_l  - in_l }}}.
\end{equation}
Unfortunately, the sum
\begin{equation}
\sum_{n=-\infty}^{\infty} {\frac{1}{{q  - in }}}
\end{equation}
diverges, and the result above does not make sense.
\\
However, as shown in \cite{gaudin}, it is possible to regulate the sum in such a way that all intermediate steps are mathematically sound. The basic idea is to associate to each line $i$ in the graph $G$ a real parameter $\tau_i$, which at the end will be taken to zero, and consider the sum
\begin{equation}
\label{def-SG-regulated}
S_G^\tau:=
\sum\limits_{n_1 , \ldots ,n_\NI }
\frac{e^{i(n_1\tau_1+n_2\tau_2+\cdots+n_\NI\tau_\NI)}}
{{\left( {n_1^2  + q_1^2 } \right)\left( {n_2^2  + q_2^2 } \right) \cdots \left( {n_\NI^2  + q_I^2 } \right)}}{\delta_G(n_1 ,n_2 , \ldots ,n_I;\{N_v\}  )},
\end{equation}
so that
\begin{equation}
S_G = \lim_{\tau_i\to 0} S_G^\tau.
\end{equation}
The subtle issues related to the interchange in the order of limits and summations are discussed at length in \cite{gaudin}. Now we have
\begin{equation}
\sum\limits_{n_1 , \ldots ,n_I } {\prod\limits_{l \in \Tbar} {\frac{1}
{{q_l  - in_l }}} \delta_G (n;N)} e^{i(n_1\tau_1+n_2\tau_2+\cdots+n_\NI\tau_\NI)} = 
{\prod\limits_{l \in \Tbar}}\sum\limits_{n_l} {\frac{e^{in_l T_l}}{{q_l  - in_l }}}.
\end{equation}
where $T_l$ is a linear combination of the $\tau_i$, whose particular form depends on the tree $\T$ being considered. It can be shown that $T_l$ is the algebraic sum of the $\tau$-variables of the lines of the cycle $\T\cup\{l\}$ (formed by adding the line $l$ to the tree $\T$): $\tau_k$ will be preceded by a plus sign if the line $k$ has the same orientation as the line $l$, and by a minus sign otherwise.
\\
The sum that we now need is given by
\begin{equation}
\sum\limits_{n} {\frac{e^{in T}}{{q  - in }}} = 2\pi\eps\nbe(\eps q)e^{T q},
\end{equation}
where $\eps$ is the sign of $T$ ($\eps=+1$ if $T>0$ and $\eps=-1$ if $T<0$)
and $\nbe(z)$ is the kernel introduced in \eqref{def-BE-kernel}. Clearly, only $\eps$ matters when the regulator $T$ is taken to zero.
\medskip

The final result for the Matsubara for the graph $G$ is therefore
\begin{equation}
S_G=\prod\limits_{k = 1}^\NI {\frac{1}{{2q_k }}\big(1 - \opR_k \big)}
\sum\limits_{\T} \left(
{\prod\limits_{j \in \T} {\frac{1}{{q_j  - i\Omega _j^{\T} (N, -iq_l )}}}}
\prod\limits_{l \in\Tbar} 2\pi\eps_l \nbe(\eps_l q_l) \right),
\end{equation}
where $\eps_l$ is the sign of the variable $T_l$ associated to each line $l\in\Tbar$ in the process of regulation.
\\
\\
Consider now the (regulated) Matsubara integral associated to the graph $G$:
\begin{multline}
\label{def-IG-regulated}
I_G^\tau:=
\idotsint dx_1 dx_2 \cdots dx_\NI \\
\frac{e^{i(x_1\tau_1+x_2\tau_2+\cdots+x_\NI\tau_\NI)}}
{{\left( {x_1^2  + q_1^2 } \right)\left( {x_2^2  + q_2^2 } \right) \cdots \left( {x_\NI^2  + q_I^2 } \right)}}{\delta_G (x_1 ,x_2 , \ldots ,x_\NI; \{N_v\} )},
\end{multline}
so that
\begin{equation}
I_G = \lim_{\tau_i\to 0} I_G^\tau.
\end{equation}
Now $\delta_G$ stands for a product of Dirac delta functions, and the integrals over the $x$-variables run from $-\infty$ to $\infty$.
\\
All the algebraic manipulations given above for the sum $S_G$ hold here unchanged, leading us to
\begin{multline}
\idotsint dx_1 dx_2 \cdots dx_\NI
{\prod\limits_{l \in \Tbar} {\frac{1} {{q_l  - ix_l }}} \delta_G (x,N)} e^{i(x_1\tau_1+x_2\tau_2+\cdots+x_\NI\tau_\NI)} \\
= 
{\prod\limits_{l \in \Tbar}}
\int_{-\infty}^{\infty} {\frac{e^{ix_l T_l}}{{q_l  - ix_l }}} dx_l,
\end{multline}
where $T_l$ is the same linear combination of the $\tau_i$ as in the case of $S_G$.
\\
The integral that we need now is given by
\begin{equation}
\int_{-\infty}^{\infty} {\frac{e^{ix T}}{{q  - ix }}} dx = -2\pi\eps\Heaviside(-\eps q)e^{T q},
\end{equation}
where $\eps$ is the sign of $T$, and $\Heaviside(q)$ is the Heaviside step function. Clearly, again only $\eps$ matters when the regulator $T$ is taken to zero.
\\
So,
\begin{equation}
\label{IG-explicit}
I_G=\prod\limits_{k = 1}^\NI {\frac{1}{{2q_k }}\big(1 - \opR_k \big)}
\sum\limits_{\T} \left(
{\prod\limits_{j \in \T} {\frac{1}{{q_j  - i\Omega _j^{\T} (N, -iq_l )}}}}
\prod\limits_{l \in\Tbar} -2\pi\eps_l \Heaviside(-\eps_l q_l) \right).
\end{equation}

\section{The relation between $S_G$ and $I_G$}

In this section we use the explicit evaluations obtained in the previous section to prove the theorems stated in section \ref{sec-main-theorems}, which provide an efficient way of computing the Matsubara sum $S_G$ in terms of the Matsubara integral $I_G$.

\begin{Lem}
\label{lemma-1}
The Matsubara integral can be written as
\begin{equation}
\label{eq:IG-sum over trees}
I_G=\sum\limits_{\T} I_G^{\T},
\end{equation}
where
\begin{equation}
\label{IG-tree-explicit}
I_G^{\T} = 
(2\pi)^\NL
\prod\limits_{k = 1}^\NI {\frac{1}{{2q_k }}}
{\prod\limits_{j \in \T} \big(1 - \opR_j \big)
{\frac{1}{{q_j  - i\Omega _j^{\T} (N, i\eps_l q_l )}}}}
\end{equation}
is the contribution to $I_G$ associated to the tree $\T$.
\end{Lem}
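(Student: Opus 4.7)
The plan is to start from the explicit evaluation \eqref{IG-explicit} and, tree by tree, absorb the reflection operators $(1-\opR_l)$ associated to the non-tree lines $l\in\Tbar$ into the Heaviside step-function factors, so that the sum over trees becomes a sum of terms of the form \eqref{IG-tree-explicit}. Since the operators $\{(1-\opR_k)\}_{k=1}^\NI$ mutually commute, and since $(1-\opR_j)$ for $j\in\T$ acts only on $q_j$ and hence commutes with every function of the variables $\{q_l : l\in\Tbar\}$, inside each tree contribution I can factor
$$\prod_{k=1}^\NI(1-\opR_k) = \prod_{j\in\T}(1-\opR_j)\;\prod_{l\in\Tbar}(1-\opR_l)$$
and apply the non-tree operators first.

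The central step is an auxiliary identity: for any $q>0$, any sign $\eps\in\{\pm 1\}$, and any function $F$,
$$(1-\opR(q))\bigl[F(q)\cdot(-2\pi\eps\,\Heaviside(-\eps q))\bigr] = 2\pi\,F(-\eps q).$$
This reduces to a two-case verification using the positivity of $q$: exactly one of $\Heaviside(\pm\eps q)$ equals $1$ and the other $0$, so precisely one of the two terms in the expansion of $(1-\opR)$ survives, with numerical coefficient $2\pi$ in either case.

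I would then iterate this identity for each $l\in\Tbar$ in turn. At each step the rational factor $\prod_{j\in\T}\frac{1}{q_j - i\Omega_j^\T(N,-iq_l)}$ plays the role of $F(q_l)$, since among the $\NL$ Heaviside factors only one depends on the current $q_l$. After $\NL$ applications the Heaviside factors are gone entirely, leaving an overall $(2\pi)^\NL$ and the substitution $q_l\mapsto -\eps_l q_l$ performed throughout the tree factor. Because $\Omega_j^\T(N,n_l)$ is linear in the $n_l$, this substitution rewrites the argument $-iq_l$ as $i\eps_l q_l$, producing precisely $\frac{1}{q_j - i\Omega_j^\T(N,i\eps_l q_l)}$. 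The remaining operator $\prod_{j\in\T}(1-\opR_j)$ then distributes factor by factor over the tree product, because each $(1-\opR_j)$ acts on a single $q_j$ and each $q_j$ appears in only one of the $\NV-1$ tree factors. Combined with the prefactor $\prod_k\frac{1}{2q_k}$ this yields \eqref{IG-tree-explicit} and hence $I_G=\sum_\T I_G^\T$.

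The main obstacle is the auxiliary identity itself: in general $(1-\opR_l)$ does not distribute over a product, and it is only the special step-function structure of the factor $-2\pi\eps_l\Heaviside(-\eps_l q_l)$, together with the positivity of $q_l$, that makes its action on the product collapse to a clean substitution with no residual step-function dependence. Once that identity is in place, everything else is a systematic rearrangement of \eqref{IG-explicit}.
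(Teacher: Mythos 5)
Your proposal is correct and follows essentially the same route as the paper's own proof: the paper's key step is precisely the identity $\big(1-\opR_l\big)\left[\Heaviside(-\eps_l q_l)f(q_l)\right]=-\eps_l f(-\eps_l q_l)$ for $q_l>0$, which combined with the prefactor $-2\pi\eps_l$ and $\eps_l^2=1$ is your auxiliary identity, and the rest is the same bookkeeping (commuting the operators, performing the substitution $q_l\mapsto-\eps_l q_l$ inside $\Omega_j^{\T}$, and collecting the $\NL$ factors of $2\pi$). Your write-up is in fact slightly more explicit than the paper's about why the substitution turns $-iq_l$ into $i\eps_l q_l$ and why the remaining tree operators distribute factor by factor.
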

\begin{proof}
Assuming $q_l>0$ we have
\begin{align*}
\big(1 - \opR_l \big)\left[\Heaviside(-\eps_l q_l)f(q_l)\right]&=\Heaviside(-\eps_l q_l)f(q_l)-
\Heaviside(\eps_l q_l)f(-q_l)\cr
&=-\eps_l f(-\eps_l q_l).
\end{align*}
Therefore, for each tree $\T$, the action of the operators $\big(1 - \opR_l \big)$ with $l\in\Tbar$ can be performed explicitly in \eqref{IG-explicit}, to yield \eqref{IG-tree-explicit}. Note that $\eps_l^2=1$ and that there are $\NL=\NI-\NV+1$ factors $(2\pi)$ (one for each line en $\Tbar$). 
\end{proof}

\begin{Lem}
\label{lemma-2}
For $q\in\allR$, $q\neq 0$, the function $\nbe(q)$ satisfies the identity
\begin{equation}
\label{nbe-identity-2}
\nbe(q) = - \Heaviside(-q) + \sign(q)\nbe(\abs{q}),
\end{equation}
where $\sign(q)$ is the sign of $q$.
\end{Lem}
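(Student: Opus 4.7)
The plan is to prove the identity by splitting into the two cases $q>0$ and $q<0$, and using the already-established reflection identity \eqref{nbe-identity-1}, namely $\nbe(z)+\nbe(-z)+1=0$. Since the statement only excludes $q=0$, these two cases exhaust the hypothesis.

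First I would handle the case $q>0$. Here $\sign(q)=+1$, $\abs{q}=q$, and $\Heaviside(-q)=0$, so the right-hand side of \eqref{nbe-identity-2} collapses to $\nbe(q)$, matching the left-hand side trivially. No use of \eqref{nbe-identity-1} is needed in this case.

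Next I would handle the case $q<0$. Here $\sign(q)=-1$, $\abs{q}=-q$, and $\Heaviside(-q)=1$, so the right-hand side becomes $-1-\nbe(-q)$. The desired identity then reduces to $\nbe(q)+\nbe(-q)+1=0$, which is precisely the identity \eqref{nbe-identity-1} applied at $z=q$. This closes the proof.

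The argument is essentially a bookkeeping exercise, so there is no real obstacle; the only thing to be careful about is to confirm that the convention for $\Heaviside$ at $0$ never enters (it does not, because $q\neq 0$ is assumed), and that $\sign(q)$ is well-defined away from zero. Once these conventions are fixed, the two cases above give the identity directly.
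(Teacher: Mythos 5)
Your proof is correct and follows exactly the paper's own argument: the case $q>0$ is trivial, and the case $q<0$ reduces to the reflection identity \eqref{nbe-identity-1}. Nothing further is needed.
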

\begin{proof}
For $q>0$ \eqref{nbe-identity-2} is the trivial identity. For $q<0$, $\nbe(q)=-1-\nbe(-q)$ is an immediate consequence of the identity \eqref{nbe-identity-1}.
\end{proof}

\begin{Lem}
\label{lemma-3}
The Matsubara sum $S_G$ can be written as
\begin{equation}
S_G = \sum\limits_{\T}
\prod\limits_{l \in\Tbar}[ 1 + \nbe_l \big(1 - \opR_l\big) ]
I_G^{\T}.
\end{equation}
\end{Lem}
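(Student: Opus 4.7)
The plan is to establish the tree-wise identity $S_G^\T = \prod_{l\in\Tbar}[1+\nbe_l(1-\opR_l)] I_G^\T$ and then sum over trees. The explicit formula for $S_G$ from section~\ref{sec-explicit evaluations} decomposes naturally as $S_G = \sum_\T S_G^\T$ with
$S_G^\T := \prod_k\frac{1}{2q_k}\prod_k(1-\opR_k)\bigl[H_\T^0\,\prod_{l\in\Tbar}2\pi\eps_l\nbe(\eps_l q_l)\bigr]$
and $H_\T^0 := \prod_{j\in\T}\frac{1}{q_j - i\Omega_j^\T(N,-iq_l)}$, in complete parallel with the decomposition of $I_G$ given in Lemma~\ref{lemma-1}. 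I split $\prod_k(1-\opR_k) = \prod_{j\in\T}(1-\opR_j)\prod_{l\in\Tbar}(1-\opR_l)$ and evaluate the inner block $\prod_{l\in\Tbar}(1-\opR_l)$ explicitly, after which the result will be matched against $\prod_l[1+\nbe_l(1-\opR_l)]$ applied to $I_G^\T$.

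The core computation is the following single-$l$ identity: for any function $G(q_l)$ and sign $\eps_l\in\{\pm 1\}$,
$(1-\opR_l)\bigl[2\pi\eps_l\nbe(\eps_l q_l)\,G(q_l)\bigr] = 2\pi\bigl[1 + \nbe_l(1+\opR_l)\bigr]F(q_l)$,
where $F(q_l) := G(-\eps_l q_l)$. To prove it, I substitute $\nbe(\eps_l q_l) = -\Heaviside(-\eps_l q_l) + \eps_l\nbe_l$ from Lemma~\ref{lemma-2} and treat the two pieces separately. The Heaviside piece collapses under $(1-\opR_l)$ using the same identity employed in the proof of Lemma~\ref{lemma-1}, producing the constant term $2\pi F(q_l)$; the $\nbe_l$ piece becomes $2\pi\nbe_l(1+\opR_l)G$, which agrees with $2\pi\nbe_l(1+\opR_l)F$ because $(1+\opR_l)$ symmetrizes in $q_l$ and is therefore invariant under the sign change $\eps_l\mapsto -\eps_l$. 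Iterating over the commuting factors $l\in\Tbar$ substitutes $q_l\to -\eps_l q_l$ throughout $H_\T^0$, producing $F_\T = \prod_{j\in\T}\frac{1}{q_j - i\Omega_j^\T(N,i\eps_l q_l)}$, and yields
$S_G^\T = (2\pi)^\NL \prod_k\frac{1}{2q_k}\prod_{j\in\T}(1-\opR_j)\,\prod_{l\in\Tbar}[1+\nbe_l(1+\opR_l)]\,F_\T$.

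The last step is to convert the inner $(1+\opR_l)$ into the required $(1-\opR_l)$ by sliding the operators past the prefactor $\prod_k\frac{1}{2q_k}$. The key observation is the anticommutation $\opR_l\,\frac{1}{2q_l} = -\frac{1}{2q_l}\,\opR_l$, from which a direct check gives the operator identity $[1+\nbe_l(1-\opR_l)]\prod_k\frac{1}{2q_k} = \prod_k\frac{1}{2q_k}\,[1+\nbe_l(1+\opR_l)]$. Since the operators $[1+\nbe_l(1-\opR_l)]$ for distinct $l\in\Tbar$ commute among themselves and also commute with $\prod_{j\in\T}(1-\opR_j)$, iterating brings all of them to the outside and produces precisely $\prod_{l\in\Tbar}[1+\nbe_l(1-\opR_l)]\,I_G^\T$. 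Summing over trees then yields the lemma.

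The main obstacle I foresee is the sign bookkeeping in the $(1+\opR_l)\leftrightarrow(1-\opR_l)$ conversion: the ``$+\opR_l$'' emerges naturally from the $(1-\opR_l)$ action on $\eps_l\nbe(\eps_l q_l)$, which is symmetric under $q_l\to -q_l$ up to a factor of $\eps_l$, whereas the ``$-\opR_l$'' demanded by the statement appears only after absorbing the sign flip produced by $\frac{1}{2q_l}$. Keeping these signs consistent and verifying the various (anti)commutation relations among $\opR_l$, $\nbe_l$, $\frac{1}{2q_l}$, and the tree-line operators $(1-\opR_j)$ will be the most delicate part of the argument.
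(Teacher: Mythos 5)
Your proof is correct and follows essentially the same route as the paper's: split $\nbe(\eps_l q_l)$ via Lemma \ref{lemma-2}, act with $(1-\opR_l)$ to turn the $2\pi\eps_l\nbe(\eps_l q_l)$ factors into $\big[1+\nbe_l(1+\opR_l)\big]$ applied to the reflected tree factor, and then convert $(1+\opR_l)$ into $(1-\opR_l)$ by anticommuting through $1/(2q_l)$, which identifies the result with $I_G^{\T}$. The one step to state more carefully is the action of $(1-\opR_l)$ on the second term of Lemma \ref{lemma-2}: it must be kept in the form $\sign(q_l)\nbe(\abs{q_l})$, which is odd under $q_l\to -q_l$, since that parity is precisely what produces the $(1+\opR_l)$ you assert rather than a $(1-\opR_l)$.
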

\begin{proof}
The identity \eqref{nbe-identity-2} implies
\begin{equation*}
\nbe(\eps_l q_l)=-\Heaviside(-\eps_l q_l) + \eps_l \sign(q_l)\nbe(\abs{q_l}).
\end{equation*}
Then, as we computed already, for $q_l>0$ and an arbitrary function $f(q)$,
\begin{equation*}
\big(1 - \opR_l \big)\left[-\Heaviside(-\eps_l q_l)f(q_l)\right]= \eps_l f(-\eps_l q_l),
\end{equation*}
whereas
\begin{align*}
\big(1 - \opR_l \big)\left[\eps_l \sign(q_l)\nbe(\abs{q_l})f(q_l)\right]&=
\eps_l \nbe(\abs{q_l}) \left[ \sign(q_l)f(q_l) - \sign(-q_l)f(-q_l) \right]\cr
&= \eps_l \sign(q_l) \nbe(\abs{q_l})\big(1 + \opR_l \big)f(q_l).
\end{align*}
But
\begin{equation*}
\big(1 + \opR_l \big)f(-\eps_l q_l)= \big(1 + \opR_l \big)f(q_l).
\end{equation*}
Therefore, for $q_l>0$,
\begin{equation*}
\big(1 - \opR_l \big)\nbe(\eps_l q_l)f(q_l) = \eps_l \left[ 1 + \nbe_l \big(1 + \opR_l\big) \right]
f(-\eps_l q_l),
\end{equation*}
and hence
\begin{multline*}
\prod\limits_{l \in\Tbar} \big(1 - \opR_l \big)
\left(
{\prod\limits_{j \in \T} {\frac{1}{{q_j  - i\Omega _j^{\T} (N, -iq_l )}}}}
\prod\limits_{l \in\Tbar} 2\pi\eps_l \nbe(\eps_l q_l) 
\right) = \cr
(2\pi)^\NL \prod\limits_{l \in\Tbar} \left[ 1 + \nbe_l \big(1 + \opR_l\big) \right]
{\prod\limits_{j \in \T}
{\frac{1}{{q_j  - i\Omega _j^{\T} (N, i\eps_l q_l )}}}}.
\end{multline*}
The final result follows from the basic property,
\begin{equation}
\frac{1}{q}\big(1+\opR(q)\big)f(q)=\big(1-\opR(q)\big)\left[ \frac{1}{q}f(q) \right].
\end{equation}
\end{proof}

Now we are finally in a position to prove the propositions of section \ref{sec-main-theorems}:

\begin{proof}[Proof of Theorem \ref{thm-main-1}]
We know from lemma \ref{lemma-3} that
\begin{equation}
\label{eq-SG-casi}
S_G = \sum\limits_{\T}
\prod\limits_{l \in\Tbar}[ 1 + \nbe_l \big(1 - \opR_l\big) ]
I_G^{\T}.
\end{equation}
But the identity
\begin{equation*}
\big(1 - \opR(q)\big)\frac{1}{q}\big(1 - \opR(q)\big)=\frac{1}{q}\big(1 + \opR(q)\big)\big(1 - \opR(q)\big)\equiv 0,
\end{equation*}
implies that all the operators $\big(1 - \opR_j\big)$, with $j \in \T$, annihilate each of the $I_G^\T$ defined in \eqref{IG-tree-explicit}. Therefore we can extend the product indices in \eqref{eq-SG-casi} from $l \in\Tbar$ to $i \in G$:
\begin{equation*}
\sum\limits_{\T}
\prod\limits_{l \in\Tbar}[ 1 + \nbe_l \big(1 - \opR_l\big) ]I_G^{\T}
=
\sum\limits_{\T}
\prod\limits_{i \in G}[ 1 + \nbe_i \big(1 - \opR_i\big) ]I_G^{\T}.
\end{equation*}
But now observe that the operator acting on $I_G^{\T}$ is $\T$-independent, so that we can transpose it with the sum over trees to get
\begin{align*}
S_G &=\prod\limits_{i \in G}[ 1 + \nbe_i \big(1 - \opR_i\big) ]\sum\limits_{\T} I_G^{\T}\cr
&=\prod\limits_{i \in G}[ 1 + \nbe_i \big(1 - \opR_i\big) ] I_G,
\end{align*}
according to \eqref{eq:IG-sum over trees}.
\end{proof}

\begin{proof}[Proof of Lemma \ref{lemma:cutsets}]
Let $C$ be a cutset of the graph $G$ and let $\T$ be an arbitrary tree of $G$.
The lines in $C$ cannot all belong to $\Tbar$ since then $C$ would not be a cutset (recall that $\T$ is a connected graph). Therefore, at least one of the lines in $C$ must belong to the tree $\T$, say $k$.
But then the operator
\begin{equation*}
\hat{\mathcal{A}}_C=\prod_{i\in C}\big(1 - \opR_i\big)
\end{equation*}
will contain the factor $\big(1 - \opR_k\big)$, which annihilates $I_G^\T$ in \eqref{IG-tree-explicit}. Since this will be true for any tree, the result follows.
\end{proof}

\begin{proof}[Proof of Theorem \ref{thm-main-2}]
The expansion of the product defining the operator $\TOp_G$ generates an expression similar to \eqref{eq:thermal-operator-form2}, but where the sums run over all possible tuples of lines at each order. But according to lemma \ref{lemma:cutsets}, if a tuple $C$ is a cutset of $G$, then the corresponding term will contain the operator $\hat{\mathcal{A}}_{C}$, defined above, which annihilates the integral $I_G$. So the tuples corresponding to cutsets can safely be omitted from $\TOp_G$.
Finally, since a tree of $G$ has $\NV-1$ lines, then the maximum number of lines that we can remove without disconnecting the graph $G$ is $\NL=\NI-(\NV-1)$. So all tuples with more than $L$ lines will be cutsets and hence the expansion of the product defining the operator $\TOp_G$ ends at degree $L$.
\end{proof}

\section{Example application: the calculation of $S_{G_4}$}
\label{sec-calculation of S4}

According to the results presented in this paper, we can compute the sum $S_{G_4}$ defined in \eqref{def-sumG4} by first computing its associated integral $I_{G_4}$ and then acting on it with the operator $\TOpp_{G_4}$.
According to theorem \ref{thm-main-2}, this operator is given by
\begin{equation}
\label{eq-opG4}
\begin{split}
\TOpp_{G_4}&=1+\nbe_1 \big(1-\opR_1\big)+\nbe_2 \big(1-\opR_2\big)\cr
&\qquad +\nbe_3 \big(1-\opR_3\big)+\nbe_4 \big(1-\opR_4\big)+\nbe_5 \big(1-\opR_5\big)\cr
&\qquad +\nbe_2 \nbe_4 \big(1-\opR_2\big) \big(1-\opR_4\big)
+\nbe_2 \nbe_3 \big(1-\opR_2\big) \big(1-\opR_3\big)
\cr
&\qquad +\nbe_1 \nbe_3 \big(1-\opR_1\big) \big(1-\opR_3\big)
+\nbe_1 \nbe_4 \big(1-\opR_1\big) \big(1-\opR_4\big)
\cr
&\qquad +\nbe_4 \nbe_5 \big(1-\opR_4\big) \big(1-\opR_5\big)
+\nbe_3 \nbe_5 \big(1-\opR_3\big) \big(1-\opR_5\big)
\cr
&\qquad +\nbe_2 \nbe_5 \big(1-\opR_2\big) \big(1-\opR_5\big)
+\nbe_1 \nbe_5 \big(1-\opR_1\big) \big(1-\opR_5\big),
\end{split}
\end{equation}
where, as before, $\nbe_i \equiv \nbe(q_i)$ and $\opR_i \equiv \opR(q_i)$.
\medskip

We note that the operator $\TOpp_{G_4}$ ends at degree 2, since the removal of 3 or more lines from $G_4$ disconnects the graph. Moreover, each of the quadratic terms in \eqref{eq-opG4} is associated with one of the trees of $G_4$ shown in figure \ref{fig-G4trees}). Note that there are no quadratic terms in \eqref{eq-opG4} with index combinations $\langle 1,2\rangle$ and $\langle 3,4\rangle$, since these are cutsets of $G_4$ (see figure \ref{fig-G4} for the labeling).
\medskip

The integral $I_{G_4}$ associated to $G_4$ is given by (see \eqref{eq-sumG4})
\begin{multline}
\label{def-I4}
I_{G_4}:=
\int_{-\infty}^{\infty}dx\int_{-\infty}^{\infty}dy
\left\{
{\frac{1}{{(x^2  + q_1^2 )((N_a  - x)^2  + q_2^2 )}}} \right.\\
\times
\left.
\frac{1}{((N_b  + x + y)^2  + q_3^2 )((N_b  + N_c  + x + y)^2  + q_4^2 )(y^2  + q_5^2 )} \right\}.
\end{multline}
This integral can be calculated following Gaudin's approach explained in section \ref{sec-explicit evaluations} or by directly performing the $x$ and $y$ integrations, one after the other, as we do now. Again, it is convenient to work with linear rather than quadratic denominators by expressing
\begin{equation}
\frac{1}{x^2+q^2}=\frac{1}{2q}\sum_{\varepsilon=\pm 1}\frac{\varepsilon}{ix+\varepsilon q},
\end{equation}
so that
\begin{multline*}
%\label{eq-I4}
I_{G_4}=
\frac{1}{2q_1 2q_2 2q_3 2q_4 2q_5}\sum_{\varepsilon_1,\ldots \varepsilon_5 = \pm 1}
\int_{-\infty}^{\infty}dx
\left\{
{\frac{\varepsilon_1}{(ix  + \varepsilon_1 q_1)}}
{\frac{\varepsilon_2}{(i(N_a - x)  + \varepsilon_2 q_2)}} 
\right.\\
\times\int_{-\infty}^{\infty}dy
\left.
{\frac{\varepsilon_3}{(i(N_b + x + y) + \varepsilon_3 q_3)}}
{\frac{\varepsilon_4}{(i(N_b + N_c + x + y) + \varepsilon_4 q_4)}}
{\frac{\varepsilon_5}{(iy  + \varepsilon_5 q_5)}}
\right\}.
\end{multline*}
\medskip

Using Mathematica 7 to perform first the $y$-integral and then the $x$-integral we find

{\allowdisplaybreaks
\begin{multline}
\label{eq-I4-result}
I_{G_4} = \frac{(2\pi)^2}{2q_1 2q_2 2q_3 2q_4 2q_5}\times\\
%r1
\left[
\frac{1}{(i {N_a}+{q_1}+{q_2}) (i
   ({N_a}+{N_b})+{q_2}+{q_3}+{q_5}) (i
   ({N_a}+{N_b}+{N_c})+{q_2}+{q_4}+{q_5})} \right.\\
   +\frac{1}{(i
   {N_b}+{q_1}+{q_3}+{q_5}) (i ({N_a}+{N_b})+{q_2}+{q_3}+{q_5})
   (i ({N_a}+{N_b}+{N_c})+{q_2}+{q_4}+{q_5})}\\ 
   -\frac{1}{(i
   {N_a}-{q_1}-{q_2}) (i {N_b}+{q_1}+{q_3}+{q_5}) (i
   ({N_b}+{N_c})+{q_1}+{q_4}+{q_5})}\\ 
   +\frac{1}{(i
   {N_b}+{q_1}+{q_3}+{q_5}) (i ({N_b}+{N_c})+{q_1}+{q_4}+{q_5})
   (i ({N_a}+{N_b}+{N_c})+{q_2}+{q_4}+{q_5})}\\ 
%r2
-\frac{1}{(i {N_a}-{q_1}-{q_2}) (i {N_c}-{q_3}-{q_4}) (i
   ({N_a}+{N_b}+{N_c})-{q_2}-{q_4}-{q_5})}\\ 
   +\frac{1}{(i
   {N_a}+{q_1}+{q_2}) (i {N_c}-{q_3}-{q_4}) (i
   ({N_b}+{N_c})-{q_1}-{q_4}-{q_5})}\\ 
   -\frac{1}{(i
   {N_c}-{q_3}-{q_4}) (i ({N_b}+{N_c})-{q_1}-{q_4}-{q_5}) (i
   ({N_a}+{N_b}+{N_c})-{q_2}-{q_4}-{q_5})}\\ 
%r3
-\frac{1}{(i {N_a}+{q_1}+{q_2}) (i {N_c}-{q_3}-{q_4}) (i
   ({N_a}+{N_b})+{q_2}+{q_3}+{q_5})}\\ 
 +\frac{1}{(i
   {N_a}-{q_1}-{q_2}) (i {N_c}-{q_3}-{q_4}) (i
   {N_b}+{q_1}+{q_3}+{q_5})}\\ 
   -\frac{1}{(i {N_c}-{q_3}-{q_4}) (i
   {N_b}+{q_1}+{q_3}+{q_5}) (i
   ({N_a}+{N_b})+{q_2}+{q_3}+{q_5})}\\
%r4
+\frac{1}{(i {N_a}-{q_1}-{q_2}) (i {N_c}+{q_3}+{q_4}) (i
   ({N_a}+{N_b})-{q_2}-{q_3}-{q_5})}\\ 
   -\frac{1}{(i
   {N_a}+{q_1}+{q_2}) (i {N_c}+{q_3}+{q_4}) (i
   {N_b}-{q_1}-{q_3}-{q_5})}\\ 
   +\frac{1}{(i {N_c}+{q_3}+{q_4}) (i
   {N_b}-{q_1}-{q_3}-{q_5}) (i
   ({N_a}+{N_b})-{q_2}-{q_3}-{q_5})}\\
%r5
+\frac{1}{(i {N_a}+{q_1}+{q_2}) (i {N_c}+{q_3}+{q_4}) (i
   ({N_a}+{N_b}+{N_c})+{q_2}+{q_4}+{q_5})}\\ 
   -\frac{1}{(i
   {N_a}-{q_1}-{q_2}) (i {N_c}+{q_3}+{q_4}) (i
   ({N_b}+{N_c})+{q_1}+{q_4}+{q_5})}\\ 
   +\frac{1}{(i
   {N_c}+{q_3}+{q_4}) (i ({N_b}+{N_c})+{q_1}+{q_4}+{q_5}) (i
   ({N_a}+{N_b}+{N_c})+{q_2}+{q_4}+{q_5})}\\ 
%r6
-\frac{1}{(i {N_a}-{q_1}-{q_2}) (i
   ({N_a}+{N_b})-{q_2}-{q_3}-{q_5}) (i
   ({N_a}+{N_b}+{N_c})-{q_2}-{q_4}-{q_5})}\\ 
   -\frac{1}{(i
   {N_b}-{q_1}-{q_3}-{q_5}) (i ({N_a}+{N_b})-{q_2}-{q_3}-{q_5})
   (i ({N_a}+{N_b}+{N_c})-{q_2}-{q_4}-{q_5})}\\ 
   +\frac{1}{(i
   {N_a}+{q_1}+{q_2}) (i {N_b}-{q_1}-{q_3}-{q_5}) (i
   ({N_b}+{N_c})-{q_1}-{q_4}-{q_5})}\\   
\left.
-\frac{1}{(i
   {N_b}-{q_1}-{q_3}-{q_5}) (i ({N_b}+{N_c})-{q_1}-{q_4}-{q_5})
   (i ({N_a}+{N_b}+{N_c})-{q_2}-{q_4}-{q_5})}
\right].
\end{multline}}%

\medskip

The explicit evaluation of sum $S_{G_4}$ can be obtained from the application of the operator $\TOpp_{G_4}$ given by \eqref{eq-opG4} to the result \eqref{eq-I4-result} for the integral $I_{G_4}$. The resulting expression would fill several pages of this journal, but it can be easily be generated by a symbolic manipulation program such as Mathematica.
\bigskip\bigskip\bigskip\bigskip

\no
{\bf Acknowledgments}. The author would like to thank the hospitality of the
Center for Astronomy and Particle Theory at the University of Nottingham, where this work was written, and the financial support of Fondecyt, under grant 1070505.
The diagrams presented in this paper were produced with JaxoDraw 2.0
\cite{jaxodraw}.

\bigskip

\end{document}